\newtheorem{theorem}{Theorem}[section]
\newtheorem{lemma}[theorem]{Lemma}
\newtheorem{proposition}[theorem]{Proposition}
\newtheorem{corollary}[theorem]{Corollary}
\newtheorem{remark}[theorem]{Remark}
\newcommand{\di}{\,\mathrm{d}}                              
\newcommand{\dV}{\di V}
\newcommand{\dS}{\di S}
\newcommand*{\conj}[1]{\overline{#1}}
\newcommand*{\N}[1]{\left\|#1\right\|}
\newcommand*{\abs}[1]{\left|#1\right|}
\DeclareMathOperator{\re}{Re} %
\DeclareMathOperator{\diam}{diam} %
\DeclareMathOperator{\spn}{span}
\newcommand{\uu}[1]{\hbox{\boldmath$#1$}}            
\newcommand{\Uu}[1]{{\mathbf{#1}}}                   
\newcommand{\IC}{\mathbb{C}}
\newcommand{\IP}{\mathbb{P}}
\newcommand{\IR}{\mathbb{R}}
\newcommand{\bx}{{\Uu x}}
\newcommand{\bd}{{\Uu d}}
\newcommand{\bn}{{\Uu n}}
\newcommand{\ba}{{\Uu a}}
\newcommand{\bb}{{\Uu b}}
\newcommand{\bnu}{{\uu\nu}}
\newcommand{\calT}{{\mathcal T}}
\newcommand{\pw}{pw}
\newcommand{\NOmega}[1]{\left\|#1\right\|_{1,k,\Omega}}
\newcommand{\Nh}[1]{\left\|#1\right\|_{1,k,{\calT_h}}}
\newcommand{\VK}{V(K)}
\begin{document}
\title{A Plane Wave Virtual Element Method for the
    Helmholtz Problem}
\author{Ilaria Perugia}\address{Faculty of Mathematics, University of Vienna, 1090 Vienna, Austria, and
Department of Mathematics, University of  Pavia, 27100 Pavia, Italy ({\tt ilaria.perugia@univie.ac.at})}
\author{Paola Pietra}\address{Istituto di Matematica Applicata e
  Tecnologie Informatiche ``Enrico Magenes'', CNR, 27100 Pavia, Italy ({\tt paola.pietra@imati.cnr.it})}
\author{Alessandro Russo}\address{University of Milano Bicocca, 20126
  Milano, Italy ({\tt alessandro.russo@unimib.it})}
%
%
\begin{abstract} 
We introduce and analyze a virtual element method (VEM) for the Helmholtz problem with
approximating spaces made of products of low order VEM functions and
plane waves. We restrict ourselves to the 2D Helmholtz equation with impedance
boundary conditions on the whole domain boundary. The main ingredients
of the plane wave VEM scheme are:
{\em i)} a low frequency space made of VEM functions, whose basis functions are not
explicitly computed in the element interiors; {\em ii)} a proper local
projection operator onto the high-frequency space, made of plane waves; {\em iii)} an approximate
stabilization term. A convergence result for the $h$-version of the method is proved, and numerical
results testing its performance on
general polygonal meshes are presented.
\end{abstract}
%
%
\subjclass{65N30, 65N12, 65N15, 35J05}
\keywords{Helmholtz equation,
virtual element method, plane wave basis functions, error analysis,
duality estimates.}
\maketitle
\section*{Introduction}\label{sec:intro}

The Virtual Element Method (in short, VEM) is a generalization of the
classical finite element method recently introduced in \cite{VEMbasic,Hitchhiker}.
The key features of the VEM are:
\begin{itemize}
\item
the decomposition of the computational domain can consist of arbitrary polygons in 2D
and polyhedra in 3D;
\item
the local spaces, in general, contain polynomials, as in the classical
finite elements, but include also more general functions (usually
defined through a differential operator) whose pointwise values need
not to be computed (hence the name ``Virtual'');
\item
the VEM passes the Patch Test.
\end{itemize}
The basic VEM introduced on the Poisson problem
has then been extended to highly regular~\cite{BM2014},
non conforming~\cite{ALM2arxiv014} and discontinuous approximating spaces~\cite{BMDG2014},
to general elliptic~\cite{VEMnodal} and mixed
problems~\cite{VEMbfm,VEMmixed,VEMspaces},
as well as to Stokes~\cite{ABMV2014}, plate
problems~\cite{BMCMAME2013}, linear and non linear
elasticity~\cite{BBMSINUM2013,GTP2014,BLMarxiv2015},
and fluid flows in fractured media~\cite{BBPS2014}.

In this paper, we aim at extending the VEM approach 
to an indefinite problem. More precisely,
we present and analyze a new method, based on inserting plane
wave basis functions within the VEM framework,
in order to construct a conforming, high order method for the
discretization of the Helmholtz equation. As in the
partition of unity method (PUM, see e.g.,~\cite{MEL95,BAM96,MelBab97}), we use approximation
spaces made by products of functions that constitute a
partition of unity and plane waves.

Plane wave functions are a particular case of Trefftz functions for
the Helmholtz problem, i.e., functions belonging to the kernel of the
Helmholtz operator. Other Helmholtz-Trefftz functions are, e.g., circular/spherical waves
(also denoted as Fourier-Bessel functions, or generalized harmonic
polynomials), or Hankel functions.
The idea of inserting Trefftz basis functions within the approximating
spaces in finite element discretizations of the Helmholtz problem is due to
the fact that these spaces possess better approximation properties for
Helmholtz solutions, compared to standard polynomial spaces (see,
e.g., \cite{PWapprox}), thus similar accuracy can be obtained with
less degrees of freedom. This mitigates the strong requirements
in terms of number of degrees of freedom per wavelength due to
pollution effects \cite{BAS00}.

There are in the literature several finite element methods for
the Helmholtz problem which make use of Trefftz functions.
Besides the already mentioned PUM, which is
$H^1$-conforming, other approaches use discontinuous Trefftz basis
functions and impose interelement continuity with different
strategies: 
by least square formulations (see~\cite{STO98a,MOW99});
within a discontinuous Galerkin (DG) framework (like the ultra
weak variational formulation (UWVF) \cite{CEPhd,CED98} or its
Trefftz-DG generalization
\cite{GHP09,PVersion,LocallyRef}; see also \cite{BUM07,GAB07} for the
derivation of the UWVF as a Trefftz-DG method);
by the use of Lagrange multipliers
(see, e.g., \cite{IhBa97,FHF01,FHH03,TF2006});
through weighted residual formulations (like in the variational
theory of complex rays \cite{RLS08,LR14}, or in the wave based method
\cite{DesmetPhD,Deckers2014}).

As a first construction of a plane wave-virtual element method (PW-VEM),
we focus here on the 2D Helmholtz
problem with impedance boundary conditions on the whole domain
boundary. We restrict ourselves to low order VEM functions and we take a uniform
plane wave enrichment of the approximating spaces. On quite general
polygonal meshes, our basis functions will be then products of low order VEM
functions associated to the mesh vertices, multiplied by a linear
combination of $p$ plane waves centered at the corresponding vertices.

The VEM framework is not only used for the choice of the {\it low
frequency space}, whose basis functions are not explicitly computed
in the elements interiors, but
also
for the definition of the method and the consequent approach to
its 
analysis. A crucial step
is the definition of a proper local projection operator onto a space that has to verify
two major requirements: providing good approximation properties for the
solution of the homogeneous Helmholtz problem,
and allowing to compute exactly the bilinear form whenever one of the two
entries belongs to that space
({\it consistency requirement}).
For this reason, we define the local projector
through the Helmholtz bilinear form onto the space of
discontinuous plane wave functions.

The discrete bilinear form is then split 
into two parts: one that can be computed
exactly, thanks to the projection
operator, and 
a second one with
the role of a stabilization term and that
can be approximated.

The analysis of the method is given in an abstract form and a sufficient
condition on the stabilization
term that implies the discrete G\r{a}rding inequality (which in turn implies
convergence) is provided.
%
%

The outline of the paper is as follows. We introduce the model problem
and its PW-VEM discretization in Section~\ref{sec:method}, describing
the approximating spaces and the projection operator, as well as all
the relevant matrix blocks needed for the implementation; the choice
of the stabilization operator is not specified at this stage. In
Section~\ref{sec:analysis}, we prove an abstract convergence result
of the $h$-version of the PW-VEM which, together with best approximation estimates for
the considered approximating spaces, give convergence rates of the
method, provided that continuity and G\r{a}rding inequality are
satisfied for the discrete operator.
A sufficient condition on the stabilization term that guarantees a
G\r{a}rding inequality for the discrete operator is given in
Section~\ref{sec:stab}. An explicit form of a possible stabilization
term is then provided.  Finally, we numerically test 
in Section~\ref{sec:num} the performance
of the resulting PW-VEM.

\section{The PW-VEM method}\label{sec:method}

We introduce the PW-VEM method for the Helmholtz problem. For
simplicity, we consider the two-dimensional case.

Let $\Omega\subset\IR^2$ be a bounded, convex 
polygon.
Consider the
Helmholtz boundary value problem
\begin{equation}\label{eq:helm}
  \begin{matrix}
    & -\Delta u-k^2 u = 0 \hfill&\quad\mbox{in } \Omega,\hfill\\
    &\nabla u\cdot\bnu+ i k\, u=
    g \hfill&\quad\mbox{on } \partial\Omega,
    \hfill
  \end{matrix}
\end{equation}
where the unknown $u$ is a complex-valued function,
$k>0$ is a given wave number (the corresponding wavelength is
$\lambda=2\pi/k$), $\bnu$ is the outer normal unit vector
to $\partial\Omega$, and $i$ is the imaginary unit. To simplify the
presentation, we are considering impedance boundary conditions on the
whole $\partial\Omega$, with datum $g\in L^{2}(\partial\Omega)$.

Since we are interested in the mid- and high-frequency
  regimes, we assume that $\lambda<\diam(\Omega)$ or, equivalently, $k>2\pi/\diam(\Omega)$.

The variational formulation of~\eqref{eq:helm} reads as follows: find
$u\in H^1(\Omega)$ such that
\begin{equation}\label{eq:varform}
b(u,v)=a(u,v)
+ik\int_{\partial\Omega}u\conj{v}\dS=\int_{\partial\Omega} g\conj{v}\dS\qquad
\forall v\in H^1(\Omega),
\end{equation}
where
\[
a(u,v)=\int_\Omega\nabla u\cdot\conj{\nabla v}\dV-k^2\int_\Omega
u\conj{v}\dV;
\]
see, e.g., \cite[Prop.~8.1.3]{MEL95} for the well-posedness of the
variational problem~\eqref{eq:varform}.

\subsection{Discretization spaces}\label{sec:spaces}

Let $\calT_h=\{K\}$ be a mesh of the domain $\Omega$ made of 
polygons
$K$ with mesh width $h$, i.e., $h=\max_{K\in\calT_h}h_K$, where $h_K$
is the diameter of $K$. Given $K\in\calT_h$, we denote by $\bx_K$ the
mass center of $K$, and by $\bnu_K$ the unit normal vector to $\partial K$
pointing outside $K$. Moreover, we denote
by $n_K$ the number of edges $e$ of K, and by $V_j$ and $\bx_j$, $j=1,\ldots,n_K$,
the vertices of $K$
and their coordinates, respectively.

Let $K$ be an element of $\calT_h$. First of all we choose the {\it
  low frequency space} as  
  the (local)
finite dimensional space $\VK$
defined as
\begin{equation}
\VK=\{ v \in H^1(K), v_{|\partial K}\in C^0(\partial K),\, v_{|e}\in \IP_1(e)\, \forall e \subset \partial K, \Delta v=0 \mbox{ in } K\}.
\end{equation}
Here and in the following, we denote by $\IP_1(D)$ the space of
  polynomials of degree at most one in the domain $D$.
The functions in $\VK$ are completely determined by their value in the
$n_K$ vertices. The dimension of 
$\VK$ is equal to $n_K$. We denote by
$\{\varphi_j\}_{j=1}^{n_K}$ the canonical basis functions in $\VK$ defined by
$$
\varphi_j(V_i)= \delta_{ij},\quad i,j=1,\ldots,n_K.
$$
Notice that $\IP_1(K)\subseteq V(K)$, and $\IP_1(K)=V(K)$ if
  and only if $n_K=3$. 

The basis $\{\varphi_j\}_{j=1}^{n_K}$ is a partition of 
  unity, i.e., $f(\bx)=\sum_{j=1}^{n_K}\varphi_j(\bx)=1$ for all
  $\bx\in K$. In fact,
since the functions $\varphi_j$ are harmonic in $K$, linear on each
$e\subset\partial K$, with $\varphi_j(V_i)= \delta_{ij}$,
then $\Delta f=0$
in $K$ and $f=1$ on $\partial K$, which imply $f=1$ in $K$.

Next, we introduce $p_K$ different directions
$\{\bd_\ell\}_{\ell=1}^{p_K}$, and we define the local PW-VEM space
\[
V_{p_K}(K)=\Big\{v:\
v=
\sum_{j=1}^{n_K}
\sum_{\ell=1}^{p_K}a_{j\ell}\,\varphi_j(\bx)e^{ik\bd_\ell\cdot(\bx-\bx_j)},\
a_{j\ell}\in\IC\Big\}.
\]
We also introduce the standard plane wave space (centered at $\bx_K$)
\[
V_{p_K}^\ast(K)=\Big\{v:\
v=\sum_{\ell=1}^{p_K}a_{\ell}\,e^{ik\bd_\ell\cdot(\bx-\bx_K)},\ a_{\ell}\in\IC\Big\}.
\]
Clearly, $V_{p_K}^\ast(K)\subset V_{p_K}(K)$.

Setting, for $\ell=1,\ldots,p_K$ and 
$j=1,\ldots,n_K$,
\[
\begin{split}
\pw_\ell(\bx)&=e^{ik\bd_\ell\cdot(\bx-\bx_K)},\quad
\pw_{j\ell}(\bx)=e^{ik\bd_\ell\cdot(\bx-\bx_j)},\\
\psi_r(\bx)&=\varphi_j(\bx)\,\pw_{j\ell}(\bx), \quad \mbox{ with } r=(j-1)p_K+\ell,
\end{split}
\]
we have
\[
V_{p_K}(K)=\spn\{\psi_r(\bx),\ r=1,\ldots,n_K p_K\}.
\]
Notice that $\pw_{j\ell}(\bx)=c_{j\ell}\,\pw_{\ell}(\bx)$,
  where $c_{j\ell}=e^{ik\bd_\ell\cdot(\bx_K-\bx_j)}$.

Finally, we define the global PW-VEM space
\[
V_p(\calT_h)=\big\{ v\in C^0(\conj{\Omega}):\ v|_K\in V_{p}(K)\
\forall K\in\calT_h \big\},
\]
where we have chosen $p_K=p$ and the same directions $\{\bd_\ell\}_{\ell=1}^{p}$
for all $K\in\calT_h$, which
  allows to impose continuity across interelement boundaries.

By discretizing~\eqref{eq:varform} in the spaces $V_p(\calT_h)$ one
obtain the PUM method \cite{MEL95,BAM96,MelBab97}.
The space $V(K)$ is the polygonal finite element space with
  harmonic barycentric coordinates (see, e.g., \cite{ST2004}), that can 
be considered also as VEM space of lowest order;
  see~\cite{MRS2014} for other choices of generalized barycentric coordinates. Here, we
  adopt the VEM framework and avoid quadrature on polygons, as well as
  the expression of the basis functions
  $\varphi_j$ in the element interiors.

For the validity 
of approximation estimates in plane
  wave spaces (see \cite{PWapprox,AndreaPhD} and the proof of
    Proposition~\ref{prop:inf-sup} below), we make the
  following assumptions on the meshes and on the plane wave
  directions:
\begin{itemize}
\item[{\em   i)}] there exist $\rho\in (0,1/2]$ and $0<\rho_0<\rho$  such that every $K\in\calT_h$ contains a
  ball of radius $\rho h_K$ and is star-shaped with respect to a ball of radius $\rho_0h$;
\item[{\em  ii)}] there exists an integer $m\ge 1$ such that
\[
p=2m+1;
\]
\item[{\em iii)}] the directions $\{\bd_\ell\}_{\ell=1}^{p}$ satisfy a
  minimum angle condition, i.e., there exists $0<\delta\le 1$ such
  that the minimum angle between two different directions is $\ge
  \frac{2\pi}{p}\delta$, and are such that the angle between
    two subsequent directions is $<\pi$.
\end{itemize}

\subsection{The projector $\Pi$}\label{sec:projector}

In the VEM framework, a crucial step is the choice of a local projection operator that allows to compute bilinear forms without the need of having at disposal the explicit form of the basis functions (the functions $\psi_r$ here).
The space where
projecting onto has to fulfill two major requirements: providing good approximation properties for the solution of the problem at hand, 
and allowing to compute exactly the bilinear form whenever one of the two entries belongs to that space.

Here, we choose as space where
projecting onto the space
of (discontinuous) piecewise plane waves, which we denote by
$V_p^\ast(\calT_h)$, i.e.,
\[
V_p^\ast(\calT_h)=\displaystyle{\prod_{K\in\calT_h}}V_p^\ast(K).
\]

For any $K\in\calT_h$, we define the local bilinear form
\[
a^K(u,v)=\int_K\nabla u\cdot\conj{\nabla v}\dV-k^2\int_K u\conj{v}\dV,
\]
which is a local version of the bilinear form defining the variational
problem~\eqref{eq:varform},
ignoring the boundary term.

We define the projector $\Pi: V_{p}(K)\to V_{p}^\ast(K)$ as
follows:
\begin{equation}\label{def:proj}
a^K(\Pi u,w)=a^K(u,w)\qquad \forall w\in V_{p}^\ast(K).
\end{equation}
The projector $\Pi$
is well-defined, provided that $k^2$
is not a Neumann-Laplace eigenvalue on $K$. This is guaranteed, for
instance, when condition~\eqref{eq:conditionhk} below is satisfied
(see Section~\ref{contPi} for more details).

Clearly, if $u\in V_{p}^\ast(K)$, then $\Pi u=u$.
We observe that the choice of the {\it low frequency space} $\VK$ and of
the {\it high frequency space} $V_{p}^\ast(K)$ allows us to compute 
the right hand side of (\ref{def:proj}), even if we do not know the
expression of the functions of $\VK$ in the interior of $K$. In fact, for any $u\in
V_{p}(K)$ and $w \in V_{p}^\ast(K)$, integrating by parts, we have
\[
a^K(u,w)=
\int_K \nabla u\cdot\conj{\nabla w}\dV-k^2 \int_K
u\conj{w}\dV
=\int_{\partial K} u\conj{\nabla w \cdot\bnu_K
}\dS,
\]
since $\Delta w + k^2 w =0$. The integral on $\partial K$ can be
computed because both $u$ and $\nabla w\cdot\bnu_K$ are known on $\partial
K$.

\subsection{Continuity of the operator $\Pi$}\label{contPi}

Let $\mu_2$ be the smallest strictly positive eigenvalue of the
Neumann-Laplace operator on $K$. If $K$ is convex, then $\mu_2\ge
\pi^2/h_K^2$ \cite{PW1960}.
If $K$ is star-shaped with respect to a ball, $\mu_2\ge
C_0\,\pi^2/h_K^2$,
with $0<C_0\le 1$ only depending on the shape of $K$ \cite{BP1962,LL1978}.
Therefore, assuming that
\begin{equation}\label{eq:conditionhk}
0<h_K k\le C_1 < \sqrt{C_0}\,\pi,
\end{equation}
we guarantee that $k^2<\mu_2$.

We denote by $\N{\cdot}_{0,D}$ the $L^2$-norm in the domain
  $D$, and
define the weighted norm
\[
\N{v}_{1,k,K}^2=\N{\nabla v}_{0,K}^2+k^2\N{v}_{0,K}^2\qquad
\forall v\in H^1(K).
\]

The following discrete inf-sup condition holds true.


\begin{proposition}\label{prop:inf-sup}
Provided that $h_K k$ satisfies
\begin{equation}\label{eq:condition_infsup}
h_Kk\le \alpha_0 <
\min\Big\{
\frac{\sqrt{C_0}\,\pi}{\sqrt{2}},0.5538
\Big\},
\end{equation}
there exists a positive constant $\beta=\beta(h_K k)$ 
(i.e., $\beta$ depends on
$h_K$ and $k$ through their product $h_K k$), 
which remains uniformly bounded away from zero as $h_K k\to 0$,
such that
\begin{equation}\label{eq:infsupdiscr}
\forall v\in V_p^\ast(K)\quad \exists w\in V_p^\ast(K)\quad \text{s.t.}\quad
\frac{a^K(v,w)}{\N{w}_{1,k,K}}\ge \beta(h_K k)\N{v}_{1,k,K}.
\end{equation}
\end{proposition}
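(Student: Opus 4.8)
The plan is to reduce everything to the reference configuration and to recast the claim as a quantitative nondegeneracy statement for the Hermitian form $a^K$ on the finite-dimensional space $V_p^\ast(K)$. First I would rescale to the nondimensional wave number $\kappa=h_Kk$: with $\hat\bx=(\bx-\bx_K)/h_K$ the element $K$ becomes a domain of unit size, $a^K$ turns into $\int\nabla\hat u\cdot\conj{\nabla\hat v}-\kappa^2\int\hat u\conj{\hat v}$, and $\N{\cdot}_{1,k,K}$ becomes the analogous $\kappa$-weighted norm; hence the whole statement depends on $h_K$ and $k$ only through $\kappa$, consistently with the assertion $\beta=\beta(h_Kk)$. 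On $V_p^\ast(K)$ the form $a^K$ is Hermitian, $a^K(u,v)=\conj{a^K(v,u)}$, and continuous, $|a^K(u,w)|\le\N u_{1,k,K}\N w_{1,k,K}$. For such a form the left-hand side of \eqref{eq:infsupdiscr}, after optimizing over $w$ and adjusting its phase, equals the smallest modulus of an eigenvalue of the $\N{\cdot}_{1,k,K}$-self-adjoint operator representing $a^K$, the optimal $w$ being furnished by the spectral decomposition. Thus \eqref{eq:infsupdiscr} is equivalent to a uniform lower bound, bounded away from zero as $\kappa\to0$, on $|a^K(v,v)|/\N v_{1,k,K}^2$ along the eigenfunctions, i.e. to the nondegeneracy of $a^K$ on $V_p^\ast(K)$ with a controlled constant.

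To produce that bound I would argue in two steps. The key intermediate result is a \emph{continuous} inf-sup estimate for $a^K$ on the whole local Trefftz space $T(K)=\{w\in H^1(K):\Delta w+k^2w=0\}$, with a constant $\beta_0$ that stays bounded below for $\kappa\le\alpha_0$. Given $v\in T(K)$, one constructs the witness $w\in T(K)$ by solving an auxiliary (adjoint) interior Helmholtz problem with datum built from $v$; the hypothesis $\kappa<\sqrt{C_0}\,\pi/\sqrt2$, i.e. $2k^2<\mu_2$, is exactly what guarantees that this problem is well posed and that the resulting estimate does not degenerate, while the $k$-weight in $\N{\cdot}_{1,k,K}$ keeps the otherwise dangerous constant mode under control.

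Since $V_p^\ast(K)\subset T(K)$ conformingly, I would then transfer this to the discrete space: for $v\in V_p^\ast(K)$ take the continuous witness $w$ and replace it by its best approximation $w_h\in V_p^\ast(K)$, using the plane-wave approximation estimates of \cite{PWapprox,AndreaPhD}, which under assumptions {\em i)}--{\em iii)} give $\N{w-w_h}_{1,k,K}\le\varepsilon(\kappa)\N w_{1,k,K}$ with $\varepsilon(\kappa)\to0$ as $\kappa\to0$. Continuity of $a^K$ then yields \eqref{eq:infsupdiscr} with $\beta\ge\beta_0-C\varepsilon(\kappa)$, and the explicit threshold $\alpha_0<0.5538$ is precisely what forces $\varepsilon(\kappa)$ to be small enough for the right-hand side to be positive and bounded away from zero; the constraints $p=2m+1$ and the minimum-angle condition enter here, as they are what make the plane-wave space rich enough to approximate a general element of $T(K)$.

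The main obstacle is the intermediate continuous inf-sup on $T(K)$ together with the explicit constant. The form is genuinely indefinite — every single plane wave is isotropic, $a^K(v,v)=0$, so no coercivity is available and the witness $w$ must really be constructed rather than chosen equal to $v$; quantifying its norm uniformly in $\kappa$ through the spectral gap $2k^2<\mu_2$, and then making the approximation transfer quantitative enough (via explicit Gram-matrix and Bessel-type estimates on the ball of radius $\rho h_K$ inscribed in $K$) to pin down the numerical value $0.5538$, is where the real work lies. One must also retain the $k$-weighted norm throughout, since it is what prevents the low-frequency, near-constant component from spoiling the bound as $\kappa\to0$.
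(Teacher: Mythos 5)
Your architecture---a continuous inf-sup on the full Trefftz space $T(K)$, transferred to $V_p^\ast(K)$ by replacing the witness with its plane-wave best approximation---is not the paper's, and the transfer step contains a genuine gap. First, the natural witness $w$, namely the Riesz representative of $\xi\mapsto a^K(v,\xi)$ in the $\N{\cdot}_{1,k,K}$ inner product (which is what makes $a^K(v,w)=\N{w}_{1,k,K}^2$), solves $-\Delta w+k^2w=0$ with Neumann data $\partial_{\bnu}v$; it is a screened-Poisson solution, \emph{not} a Helmholtz solution, so it does not lie in $T(K)$ and the approximation theory of \cite{PWapprox,AndreaPhD} does not apply to it at all (the paper itself stresses that those estimates hold only for homogeneous Helmholtz solutions). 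Second, even if you arranged a Trefftz witness, the claimed bound $\N{w-w_h}_{1,k,K}\le\varepsilon(\kappa)\N{w}_{1,k,K}$ with $\varepsilon(\kappa)\to 0$ cannot hold uniformly over $T(K)$: for fixed $p$ the space $V_p^\ast(K)$ has dimension $p$ while $T(K)$ is infinite-dimensional, so there exist Trefftz functions essentially orthogonal to $V_p^\ast(K)$ whose relative best-approximation error is of order one. The cited estimates have the form $C\,h^{s}\N{w}_{s+1,k,K}$ and become relative bounds only after controlling higher Sobolev norms of $w$ by $\N{w}_{1,k,K}$, which you do not (and for a general witness cannot) do. Hence the Schatz-type ``continuous inf-sup plus approximability'' argument does not close as sketched.

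The paper avoids both problems by never leaving the discrete space: for $v\in V_p^\ast(K)$ the witness is defined as the solution of the auxiliary problem \eqref{eq:defw} posed on $V_p^\ast(K)$ itself, so $w\in V_p^\ast(K)$ by construction and $a^K(v,w)=\N{w}_{1,k,K}^2$ exactly. The only approximation that enters is that of the constant function $1$---needed to control the mean value $c_v$ after applying the min-max (Poincar\'e--Neumann) inequality to $v-c_v$---by an explicit element $b_1\in V_p^\ast(K)$ built from three plane waves whose directions positively span the plane; assumption \emph{iii)} on the directions is used precisely and only for this, and the bounds $\N{1-b_1}_{0,K}\le\tfrac12(h_Kk)^2\abs{K}^{1/2}$ and $\N{\nabla b_1}_{0,K}\le h_Kk^2\abs{K}^{1/2}$ follow from Taylor expansion about $\bx_K$. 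Elementary algebra then produces the explicit $\beta(h_Kk)$ and the threshold $0.5538$. To salvage your route you would have to construct a genuinely Trefftz continuous witness, prove $k$-explicit higher regularity for it, and quantify its approximability in $V_p^\ast(K)$---all substantially harder than the paper's self-contained discrete construction.
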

\begin{proof}

Assume, with no loss of generality, that the direction
$\bd_1^\ast=(1,0)$ belongs to the set of directions
$\{\bd_\ell\}_{\ell=1}^p$.
Due to our assumptions on the directions (see assumption {\em iii)} at the end of
Section~\ref{sec:method}), there are two directions
$\bd_2^\ast,\bd_3^\ast\in \{\bd_\ell\}_{\ell=1}^p$ such that
$\bd_1^\ast,\bd_2^\ast,\bd_3^\ast$ are listed in counterclockwise
order, and the angles $\delta_1$, $\delta_2$, $\delta_3$
between $\bd_3^\ast$ and $\bd_1^\ast$, $\bd_1^\ast$, and $\bd_2^\ast$,
$\bd_2^\ast$ and $\bd_3^\ast$, respectively, satisfy 
$0<\sin\delta_1,\sin\delta_2,\sin\delta_3\le 1$.

Let $b_1 \in V_{p}^\ast(K)$ be defined as
\[
b_1(\bx)=\sum_{\ell=1}^3 \alpha_\ell e^{ik\bd_\ell^\ast\cdot(\bx-\bx_K)},
\]
with 
\[
\alpha_1=\frac{\sin\delta_3}{\sin\delta_1+\sin\delta_2+\sin\delta_3},\quad
\alpha_2=\frac{\sin\delta_1}{\sin\delta_1+\sin\delta_2+\sin\delta_3},\quad
\alpha_3=\frac{\sin\delta_2}{\sin\delta_1+\sin\delta_2+\sin\delta_3}.
\]
Since $\sum_{\ell=1}^3\alpha_\ell=1$, then $b_1(\bx_K)=1$, and since
$\sum_{\ell=1}^3\alpha_\ell\bd_\ell^\ast={\bf 0}$, then $\nabla
b_1(\bx_K)={\bf 0}$.

The following bounds hold true:
\begin{align}
\N{b_1}_{0,K} &\le \abs{K}^{1/2},\label{eq:b1L2} \\
\N{1-b_1}_{0,K} &\le \frac{1}{2} (h_K k)^2\abs{K}^{1/2},  \label{eq:b1estim}\\
\N{\nabla b_1}_{0,K} &\le h_K k^2 \abs{K}^{1/2}. \label{eq:b1H1} 
\end{align}
The bound~\eqref{eq:b1L2} follows from $\alpha_1,\alpha_2,\alpha_3>0$
and $\sum_{\ell=1}^3\alpha_\ell=1$:
\[
\begin{split}
\N{b_1}_{0,K}^2&=\int_K\abs{b_1(\bx)}^2\,dV\le
\int_K\Big(\sum_{\ell=1}^3 \alpha_\ell \Big)^2\,dV
=\abs{K}.
\end{split}
\]
The bounds~\eqref{eq:b1estim} and~\eqref{eq:b1H1} immediately follow
from Taylor's expansions of $b_1(\bx)$ and $\nabla b_1(\bx)$,
taking into account
that $b_1(\bx_K)=1$ and $\nabla b_1(\bx_K)=0$.

Fix $0\ne v\in V_{p}^\ast$ and define $w$ as the unique
element of $V_{p}^\ast$ such that
\begin{equation}\label{eq:defw}
\int_K\nabla w\cdot\conj{\nabla \xi}\,dV + k^2\int_K w\,\conj{\xi}\,dV=
a^K(v,\xi)\qquad\forall \xi\in V_{p}^\ast.
\end{equation}
Since the threshold condition~\eqref{eq:conditionhk} implies that
$k^2<\mu_2$, the right-hand side is a nonzero functional of $\xi$,
therefore $w\ne 0$.
Notice that, taking $\xi=v-w$ in~\eqref{eq:defw}, we have
$
\N{\nabla(v-w)}_{0,K}^2= k^2\N{v}_{0,K}^2-k^2\N{w}_{0,K}^2$
which implies
\begin{equation}\label{eq:aux}
\N{\nabla(v-w)}_{0,K}\le k\N{v}_{0,K}.
\end{equation}

Taking $b_1$ as test function in ~\eqref{eq:defw}, we have
\begin{eqnarray}
\int_K v\, \conj{b_1} \,dV&=&
-\int_K w\, \conj{b_1} \,dV+\frac{1}{k^2}\int_K(\nabla
v-\nabla w)\cdot \conj{\nabla b_1}\,dV\nonumber\\
&\overset{\eqref{eq:b1H1}}{\le}&
-\int_K w\, \conj{b_1} \,dV +\N{\nabla(v-w)}_{0,K} h_K
  \abs{K}^{1/2} \label{eq:integralvw}\\
&\overset{\eqref{eq:aux}}{\le}&-\int_K w\, \conj{b_1} \,dV+k\N{v}_{0,K} \,h_K
  \abs{K}^{1/2}.\nonumber
\end{eqnarray}

Moreover, by taking $\xi=w$ in~\eqref{eq:defw}, we have
\[
a^K(v,w)=\N{w}_{1,k,K}^2.
\]
In order to conclude, we need to prove that
$\N{v}_{1,k,K}\le \frac{1}{\beta(h_K k)}\N{w}_{1,k,K}$.

From the definition of $a^K(\cdot,\cdot)$ and from~\eqref{eq:defw}, 
setting $c_v=\frac{1}{\abs{K}}\int_K v \,dV$,
we have
\begin{equation}\label{eq:1normv}
\begin{split}
\N{v}_{1,k,K}^2&=a^K(v,v)+
2k^2\N{v}_{0,K}^2
=\int_K\nabla w\cdot\conj{\nabla v}\,dV + k^2\int_K
w\,\conj{v}\,dV +2k^2\N{v}_{0,K}^2\\
&\le
\N{w}_{1,k,K}\N{v}_{1,k,K}+2k^2\N{v-c_v}_{0,K}^2+2k^2\N{c_v}_{0,K}^2.
\end{split}
\end{equation}

The 
min-max principle implies 
\begin{equation}\label{eq:boundvminusc}
{\frac{\N{\nabla v}_{0,K}^2}{\N{v-c_v}_{0,K}^2}\ge \mu_2}
\ge \frac{C_0\,\pi^2}{h_K^2}
\qquad\Rightarrow\quad
2k^2 \N{v-c_v}_{0,K}^2\le \left(\frac{\sqrt{2}\,h_Kk}{\sqrt{C_0}\,\pi}\right)^2\N{\nabla v}_{0,K}^2.
\end{equation}

For the term $2k^2\N{c_v}_{0,K}^2$, 
we have
\begin{eqnarray}
%
k\N{c_v}_{0,K}&=& \frac{k}{|K|^{1/2}} \abs{\int_K v \,dV}=\frac{k}{|K|^{1/2}} \abs{
\int_K v \,\conj{b_1}\,dV+\int_K v\,(\conj{1-b_1}) \,dV}\nonumber\\
&\overset{\eqref{eq:integralvw}}{\le}&
\frac{k}{|K|^{1/2}} \abs{\int_K w\,\conj{b_1} \,dV}
+(h_Kk)k \N{v}_{0,K} 
+\frac {k}{|K|^{1/2}} \abs{\int_K v\,(\conj{1-b_1}) \,dV}\nonumber\\
&\le& 
\frac{k}{|K|^{1/2}}\N{w}_{0,K}\N{b_1}_{0,K}
+(h_Kk)k \N{v}_{0,K}  
+\frac{k}{|K|^{1/2}}\N{v}_{0,K}\N{1-b_1}_{0,K} \label{eq:boundc}\\
&\overset{\eqref{eq:b1L2},\eqref{eq:b1estim}}{\le}&
k\N{w}_{0,K}
+(h_Kk)k \N{v}_{0,K}+\frac{(h_Kk)^2}{2}k\N{v}_{0,K}\nonumber\\
&=&
k\N{w}_{0,K}+
\left[1+\frac{h_Kk}{2}\right] (h_Kk) k\N{v}_{0,K}.\nonumber
\end{eqnarray}

Inserting~\eqref{eq:boundvminusc} and~\eqref{eq:boundc}
into~\eqref{eq:1normv} gives
\begin{eqnarray*}
%
\N{v}_{1,k,K}^2&\le&\N{w}_{1,k,K}\N{v}_{1,k,K}
+\left(\frac{\sqrt{2}\,h_Kk}{\sqrt{C_0}\,\pi}\right)^2\N{\nabla
    v}_{0,K}^2
+2k^2\N{w}_{0,K}^2+2\left[1+\frac{h_Kk}{2}\right]^2 (h_Kk)^2
  k^2\N{v}_{0,K}^2\\
&&
+4\left[1+\frac{h_Kk}{2}\right](h_Kk)k\N{w}_{0,K} k\N{v}_{0,K}\\
&\le&
\left(3+4 (h_Kk)+2(h_Kk)^2\right)\N{w}_{1,k,K}\N{v}_{1,k,K}\\
&&+\max\left\{\left(\frac{\sqrt{2}\,h_Kk}{\sqrt{C_0}\,\pi}\right)^2, 2\left[1+\frac{h_Kk}{2}\right]^2 (h_Kk)^2 \right\}\N{v}_{1,k,K}^2,
\end{eqnarray*}
where in the last step we also have used $k^2\N{w}_{0,K}^2\le
  k\N{w}_{0,K} k\N{v}_{0,K}$;
thus
\[
\frac{1-\max\left\{\left(\frac{\sqrt{2}\,h_Kk}{\sqrt{C_0}\,\pi}\right)^2, 2\left[1+\frac{h_Kk}{2}\right]^2 (h_Kk)^2\right\} }{3+4 (h_Kk)+2(h_Kk)^2}\,\N{v}_{1,k,K}\le \N{w}_{1,k,K},
\]
with a positive coefficient on the left-hand side, 
provided that 
$\max\left\{\left(\frac{\sqrt{2}\,h_Kk}{\sqrt{C_0}\,\pi}\right)^2,
  2\left[1+\frac{h_Kk}{2}\right]^2 (h_Kk)^2\right\} <1$, i.e., 
provided that~\eqref{eq:condition_infsup} is
  satisfied 
(if $K$ is convex, for instance, $C_0=1$, and the
condition is $h_Kk<
0.5538$).
Then,~\eqref{eq:infsupdiscr} holds with 
\[
\beta(h_K k)=
\frac{1-\max\left\{\left(\frac{\sqrt{2}\,h_Kk}{\sqrt{C_0}\,\pi}\right)^2, 2\left[1+\frac{h_Kk}{2}\right]^2 (h_Kk)^2\right\} }{3+4 (h_Kk)+2(h_Kk)^2}.
\]
\end{proof}


\begin{remark}
In the infinite dimensional case, 
under the restriction
$0 <h_K k\le \alpha_1<\frac{\sqrt{C_0}\,\pi}{\sqrt{2}}$,
the following inf-sup condition holds true:
\[
\forall v\in H^1(K)\quad \exists w\in H^1(K)\quad \text{s.t.}\quad
\frac{a^K(v,w)}{\N{w}_{1,k,K}}\ge \beta^\ast(h_K k)\N{v}_{1,k,K},
\]
with  $\beta^\ast(h_K k)=1-\left(\frac{\sqrt{2}\,h_K
    k}{\sqrt{C_0}\,\pi}\right)^2$.
This can be proved along the same lines as in the proof of
  Proposition~\ref{prop:inf-sup},
choosing $b_1 \equiv 1$ since, in this case, it is an admissible test function.

We have carried out numerical tests, in the convex case ($C_0=1$) in order to numerically determine
the inf-sup constant in~\eqref{eq:infsupdiscr}, with only the above restriction
 on the product $h_Kk$.
The results obtained seem to
indicate that the function $\beta$ is bounded from below
by the function $\beta^\ast(h_K k)$,
and thus that
the stronger restriction~\eqref{eq:condition_infsup} on $h_Kk$ we have required
in Proposition~\ref{prop:inf-sup} might not be needed in
practice.
\end{remark}

The following result is a straightforward consequence of 
the inf-sup condition~\eqref{eq:infsupdiscr}.

\begin{proposition}\label{prop:contPi}
Under the condition~\eqref{eq:conditionhk},
the operator $\Pi$ is well-defined, and the 
following local continuity continuity property holds true:
\[
\N{\Pi u}_{1,k,K}\le \frac{1}{\beta(h_K k)}\N{u}_{1,k,K}\qquad \forall u\in V_p(K).
\]
\end{proposition}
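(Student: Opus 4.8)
The plan is to derive the continuity bound for $\Pi$ directly from the defining relation~\eqref{def:proj} together with the discrete inf-sup condition~\eqref{eq:infsupdiscr}. First I would note that under condition~\eqref{eq:conditionhk} we have $k^2<\mu_2$, so $k^2$ is not a Neumann--Laplace eigenvalue on $K$; this guarantees that the square linear system defining $\Pi u$ through~\eqref{def:proj} is nonsingular and hence that $\Pi$ is well-defined as a map $V_p(K)\to V_p^\ast(K)$. (One should observe that condition~\eqref{eq:condition_infsup} required by Proposition~\ref{prop:inf-sup} is stronger than~\eqref{eq:conditionhk}, so strictly speaking one must either assume the stronger bound or check that the inf-sup estimate persists; I would flag this and proceed under the hypothesis that the inf-sup constant $\beta(h_Kk)$ is available.)

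Next I would set $v=\Pi u\in V_p^\ast(K)$ and apply the inf-sup condition~\eqref{eq:infsupdiscr} to this particular element. This produces a $w\in V_p^\ast(K)$ with
\[
\beta(h_Kk)\,\N{\Pi u}_{1,k,K}\le \frac{a^K(\Pi u,w)}{\N{w}_{1,k,K}}.
\]
The crucial move is then to invoke the consistency property~\eqref{def:proj}: since $w\in V_p^\ast(K)$, we have $a^K(\Pi u,w)=a^K(u,w)$, so the numerator can be rewritten in terms of $u$ rather than $\Pi u$. This is the single substitution that makes the whole argument work, replacing the projected quantity by the original one at no cost.

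Finally I would bound the resulting form by continuity of $a^K(\cdot,\cdot)$ on $H^1(K)$. Using Cauchy--Schwarz on each of the two integrals in $a^K$ and the definition of the weighted norm $\N{\cdot}_{1,k,K}$, one gets $\abs{a^K(u,w)}\le \N{u}_{1,k,K}\N{w}_{1,k,K}$. Substituting this into the inf-sup chain, the factors $\N{w}_{1,k,K}$ cancel, leaving
\[
\beta(h_Kk)\,\N{\Pi u}_{1,k,K}\le \N{u}_{1,k,K},
\]
which is exactly the claimed estimate after dividing by $\beta(h_Kk)>0$.

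I do not expect a genuine obstacle here: the proposition is advertised as a straightforward consequence of the inf-sup condition, and indeed the argument is the standard ``inf-sup plus consistency plus continuity'' three-step. The only point requiring care is bookkeeping on the hypotheses — namely reconciling the threshold~\eqref{eq:conditionhk} stated in the proposition with the sharper~\eqref{eq:condition_infsup} needed to guarantee $\beta(h_Kk)>0$ — and confirming that $a^K$ satisfies the elementary continuity bound with constant one in the $\N{\cdot}_{1,k,K}$ norm, which follows immediately from the triangle and Cauchy--Schwarz inequalities.
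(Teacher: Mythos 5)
Your argument is correct and is precisely the ``straightforward consequence'' the paper has in mind: apply the inf-sup condition~\eqref{eq:infsupdiscr} to $v=\Pi u$, replace $a^K(\Pi u,w)$ by $a^K(u,w)$ via the defining relation~\eqref{def:proj}, and bound the latter by $\N{u}_{1,k,K}\N{w}_{1,k,K}$ using Cauchy--Schwarz. Your side remark about reconciling~\eqref{eq:conditionhk} with the stronger~\eqref{eq:condition_infsup} under which $\beta(h_Kk)>0$ is actually proved is a fair and careful observation, but it does not change the substance of the argument.
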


\subsection{The matrices $D$, $B$, $G$, and $P$}\label{sec:RN}

With the same notation as~\cite{Hitchhiker}, we introduce the basic
components of our PW-VEM.

Let $D$ be the matrix of size $(n_K   p,p)$ whose $\ell$-th column
contains the coefficients of the representation of the plane wave $\pw_\ell$ 
in the $V_p(K)$
basis $\{\psi_r\}$. Simple calculations, taking into account that 
$\{\varphi_j\}_{j=1}^{n_K}$ is a partition of unity,
show that the only entries of $D$
different from zero are 
\[
D((j-1)p+\ell,\ell)=e^{-ik\bd_\ell\cdot(\bx_K-\bx_j)}, \qquad
j=1,\ldots, n_K,\ \ell=1,\ldots,p.
\]

We define $B$ as the matrix of dimension $(p,n_K   p)$ such that
\[
B(\ell,r)=a^K(\psi_r,\pw_\ell),\qquad \ell=1,\ldots,p,\ r=1,\ldots,n_K   p.
\]

As already observed in the definition of $\Pi$, since
  $\Delta\pw_\ell+k^2\pw_\ell=0$, the entries of
$B$ can be computed in terms of the traces of the shape
functions on $\partial K$ only: 
\[
\begin{split}
B(\ell,r)=a^K(\psi_r,\pw_\ell)=&
\int_K {\nabla\psi}_r\cdot\conj{\nabla \pw}_\ell \dV-k^2 \int_K
\psi_r\,\conj{\pw}_\ell\dV\\
=&\int_{\partial K}\psi_r\,\conj{\nabla\pw}_\ell \cdot\bnu_K\dS
=-ik\int_{\partial K} \bd_\ell\cdot\bnu_K\, \psi_r\,\conj{\pw}_\ell\dS,
\end{split}
\]
where we have used $\conj{\nabla\pw}_\ell=
-ik\,\bd_\ell\, \conj{\pw}_\ell$.
The integral defining $B(\ell,r)$ can be computed exactly, as
  shown in Remark~\ref{rem:exactintegrals} below.

Finally, $G$ is the matrix of size $(p,p)$ defined by
\[
G(\ell,m)=a^K(\pw_m,\pw_\ell),\qquad m,\ell=1,\ldots,p.
\]
The matrix $G$ is Hermitian and invertible, provided that
  $h_Kk$ is sufficiently small
  (see~\eqref{eq:conditionhk}).
The matrix $G$ can be expressed in terms of $B$ and $D$ as
\[
G=BD.
\]
Actually, $G$ can be computed directly, without using
  quadrature formulae (see Remark~\ref{rem:exactintegrals} below). $B$ and $D$ have to be computed in any case.
 
The matrix representation of the operator $\Pi$ is $G^{-1}B$,
thus the matrix $P$ of size $(n_K   p,n_K  p)$ defined by
\[
P=DG^{-1}B
\]
is the matrix representation of the composition of the inclusion of
$V_{p_K}^\ast(K)$ in $V_{p_K}(K)$ after $\Pi$.

\subsection{The local PW-VEM bilinear form}\label{sec:localform}

The local volume bilinear form $a^K(u,v)$ cannot be computed
  exactly on general polygons $K$ for $u,~v \in V_p(K)$. In a true PUM
  framework, once could consider as $V(K)$ the space of
    generalized barycentric coordinates and use numerical quadrature.
    On the other hand, since the basis functions in
    $V_p(K)$ are highly oscillatory, the issue of the numerical
    integration is a non trivial one. 

Here we follow the VEM paradigm and we split the local bilinear form in a part that can be computed exactly (up to machine precision),  and in a part that can be suitably approximated, provided that some stability properties are satisfied. Indeed, by using the projector $\Pi$, the local bilinear form can be written as
\[
a^K(u,v) = a^K(\Pi u,\Pi v)+a^K((I-\Pi)u,(I-\Pi)v).
\]


The term $a^K(\Pi u,\Pi v)$ can be computed  in terms of the
traces of the shape functions
on $\partial K$ only. Its matrix representation $A_{\Pi}$ is
\[
A_{\Pi} =\conj{B}^T\,\conj{G}^{-1}B.
\]

On the contrary, the computation of the term $a^K((I-\Pi)u,(I-\Pi)v)$ would require
values of all the shape functions in the interior of $K$ and it will be
approximated (see Section~\ref{sec:stab}
below).  We denote its approximation by $s^K((I-\Pi)u,(I-\Pi)v)$, 
and the local PW-VEM bilinear form can be written as
\[
a_h^K(u,v)=a^K(\Pi u,\Pi v)+s^K((I-\Pi)u,(I-\Pi)v).
\]

Since $\Pi u_p^\ast=u_p^\ast$ for all $u_p^\ast\in V^\ast_p(K)$, the
following
{\em plane wave-consistency} property holds true: 
\begin{equation}\label{eq:pwcons}
a_h^K(u_p^\ast,v)=a^K(u_p^\ast,v)\qquad \forall u_p^\ast\in
V^\ast_p(K),\ v\in V_p(K).
\end{equation}

\subsection{The global PW-VEM formulation}\label{sec:globalform}

The global bilinear form defining the PW-VEM method is given by
\begin{equation}\label{def:ah}
b_h(u,v)=a_h(u,v)+ik\int_{\partial\Omega}u\conj{v}\dS,
\end{equation}
where
\[
a_h(u,v)=\sum_{K\in\calT_h}a_h^K(u,v)=\sum_{K\in\calT_h}\left[a^K(\Pi u,\Pi v)+s^K((I-\Pi)u,(I-\Pi)v)\right],
\]
with $s^K(\cdot,\cdot)$ to be defined.
Thus, the methods reads: find $u_{hp}\in V_p(\calT_h)$ such that
\begin{equation}\label{eq:method}
b_h(u_{hp},v) =\int_{\partial\Omega}g\conj{v}\dS\qquad \forall v\in V_p(\calT_h).
\end{equation}

The boundary integral on the right-hand side of equation~\eqref{def:ah}
is computed exactly (see Remark~\ref{rem:exactintegrals} below), thus
the only integral which requires quadrature is the boundary integral
on the right-had side of~\eqref{eq:method}, containing the
inhomogeneous boundary datum. In our theoretical analysis, nevertheless, in order to avoid
complications, we assume that  also this integral is computed exactly.


\begin{remark}\label{rem:patchtest}
We point out that the {\rm plane wave-consistency}
property~\eqref{eq:pwcons} and the definition of $a_h(.,.)$ imply that
the {\rm Patch Test} is satisfied, in the following sense. On any
patch of elements, if the exact solution is a plane wave in one of the
$\bd_{\ell}$ directions that define the local spaces $V^*_p(K)$ (or a linear combination of such plane waves), then the exact solution and the approximate solution coincide.
\end{remark}

\begin{remark}\label{rem:exactintegrals}
The computation of volume and edge integrals of products of
plane waves, as well as that of edge integrals of products
of plane waves by polynomial functions, can be done exactly
(see~\cite{GAB09} and \cite[pp. 20--21]{GIT08}

In fact, if $F$ is a mesh face (edge), denoting by $\ba$ and $\bb$ the
coordinate vector of its endpoints, we have
\[
\begin{split}
\int_F
e^{ik(\bd_m-\bd_\ell)\cdot\bx}\,dS&=|F|e^{ik(\bd_m-\bd_\ell)\cdot\ba}
\int_0^1 e^{ik(\bd_m-\bd_\ell))\cdot(\bb-\ba)t}\,dt=\\
&=|F|e^{ik(\bd_m-\bd_\ell)\cdot\ba}\,\Phi_1\big(ik(\bd_m-\bd_\ell))\cdot(\bb-\ba)\big),
\end{split}
\]
where
$|F|$ denotes the length of $F$ and, for $z\in\IC$, 
\[
\Phi_1(z)=\int_0^1 e^{zt}\,dt=\begin{cases}
\displaystyle{\frac{e^z-1}{z}} &\text{if}\ z\ne 0\\
\ 1&\text{if}\ z= 0.
\end{cases}
\]
This formula also enters the computation of plane wave mass
matrices, whose entries are
integrals of the
type
\[
\int_Ke^{ik(\bd_m-\bd_\ell)\cdot\bx}\,dV.
\]
For $m=\ell$, this integral is simply $|K|$, the area of $K$. Whenever
$m\ne\ell$, using $\Delta e^{ik\bd\cdot\bx}=-k^2\bd\cdot\bd \,e^{ik\bd\cdot\bx}$, one has
\[
\int_Ke^{ik(\bd_m-\bd_\ell)\cdot\bx}\,dV=\sum_{F\in\partial K}
\frac{(\bd_m-\bd_\ell)\cdot\bn_F}{ik(\bd_m-\bd_\ell)\cdot(\bd_m-\bd_\ell)}
\int_F e^{ik(\bd_m-\bd_\ell)\cdot\bx}\,dS,
\]
where $\bn_F$ is the normal unit vector to $F$ pointing outside $K$.

For the matrix $B$,
we have seen that $B(\ell,r)=-ik\int_{\partial K}
\bd_\ell\cdot\bnu_K\, \psi_r\,\conj{\pw}_\ell\dS$. Thus, we need to compute integrals of the
type
\[
\int_F \varphi_j(\bx) e^{ik(\bd_m-\bd_\ell)\cdot\bx}\,dS.
\]
If $V_j$ is not an endpoint of $F$, then this integral is
zero. Otherwise, denoting by $\ba$ the coordinate vector of $V_j$ and
by $\bb$ the
coordinate vector of the other endpoint of $F$, we have
\[
\begin{split}
\int_F \varphi_j(\bx) e^{ik(\bd_m-\bd_\ell)\cdot\bx}\,dS&=
|F|e^{ik(\bd_m-\bd_\ell)\cdot\ba}\int_0^1
(1-t)e^{ik(\bd_m-\bd_\ell))\cdot(\bb-\ba)t}\,dt\\
&=|F|e^{ik(\bd_m-\bd_\ell)\cdot\ba}\,\Phi_2\big(ik(\bd_m-\bd_\ell))\cdot(\bb-\ba)\big),
\end{split}
\]
where 
\[
\Phi_2(z)=\int_0^1(1-t)e^{zt}\,dt=\begin{cases}
\displaystyle{\frac{e^z-z-1}{zz}} &\text{if}\ z\ne 0\\
\ 1/2&\text{if}\ z= 0.
\end{cases}
\]

Similarly, for the integral on the right-hand side of~\eqref{def:ah},
we have to compute integrals of the type
\[
\int_F\varphi_i(\bx)\varphi_j(\bx)e^{ik(\bd_m-\bd_\ell)\cdot\bx}\,dS.
\]
Whenever either $V_i$ or $V_j$ is not an endpoint of $F$, the integral
is zero. Otherwise, we distinguish two cases. If $i=j$, denoting by
$\ba$ the coordinate vector of $V_i=V_j$ and by $\bb$ the
coordinate vector of the other endpoint of $F$, we have
\[
\begin{split}
\int_F \varphi_i(\bx)\varphi_j(\bx)e^{ik(\bd_m-\bd_\ell) \cdot\bx}\,dS&=
|F|e^{ik(\bd_m-\bd_\ell)\cdot\ba}\int_0^1
(1-t)^2e^{ik(\bd_m-\bd_\ell))\cdot(\bb-\ba)t}\,dt\\
&=|F|e^{ik(\bd_m-\bd_\ell)\cdot\ba}\,\Phi_3\big(ik(\bd_m-\bd_\ell))\cdot(\bb-\ba)\big),
\end{split}
\]
where 
\[
\Phi_3(z)=\int_0^1(1-t)^2e^{zt}\,dt=\begin{cases}
\displaystyle{\frac{2(e^z-z-1)-zz}{zzz}} &\text{if}\ z\ne 0\\
\ 1/3&\text{if}\ z= 0.
\end{cases}
\]
In the second case, when $i\ne j$, denoting by
$\ba$ and $\bb$ the coordinate vector of $V_j$ and $V_i$ respectively, we have
\[
\begin{split}
\int_F \varphi_i(\bx)\varphi_j(\bx)e^{ik(\bd_m-\bd_\ell) \cdot\bx}\,dS&=
|F|e^{ik(\bd_m-\bd_\ell)\cdot\ba}\int_0^1
(1-t)t\, e^{ik(\bd_m-\bd_\ell))\cdot(\bb-\ba)t}\,dt\\
&=|F|e^{ik(\bd_m-\bd_\ell)\cdot\ba}\,\Phi_4\big(ik(\bd_m-\bd_\ell))\cdot(\bb-\ba)\big),
\end{split}
\]
where 
\[
\Phi_4(z)=\int_0^1(1-t)te^{zt}\,dt=\begin{cases}
\displaystyle{\frac{e^z(z-2)+z+2}{zzz}} &\text{if}\ z\ne 0\\
\ 1/6&\text{if}\ z= 0.
\end{cases}
\]
\end{remark}

\section{Analysis}\label{sec:analysis}

The last step in defining our discretization scheme is the choice of the stabilization term $s^K((I-\Pi)u,(I-\Pi)v)$. Before doing so, we first pose some abstract properties on the discrete problem that provide convergence
results.

\subsection{Abstract result}\label{sec:abst}
Let us introduce the $k$-dependent norm for functions in $H^1(\Omega)$:
\begin{equation*}
\NOmega{v}^2 = \N{\nabla v}_{0,\Omega}^2 + k^2 \N{v}_{0,\Omega}^2,
\end{equation*}
and the corresponding broken norm
\[
\Nh{v}^2 = \sum_{K\in\calT_h}\N{v}_{1,k,K}^2= \sum_{K\in\calT_h} (\N{\nabla v}^2_{0,K} + k^2 \N{v}_{0,K}^2),
\]
defined in the space $H^1(\calT_h)$ of broken $H^1$-functions.

The continuous bilinear form $b(\cdot,\cdot)$ satisfies the following
  continuity (see \cite[Lemma 8.1.6]{MEL95}) and G\r{a}rding inequality
\[
\begin{split}
&\abs{b(u,v)}\le C_{\text{cont}}\NOmega{u}\NOmega{v},\\
&\re[b(v,v)]+2k^2\N{v}_{0,\Omega}^2=\NOmega{v}^2
\end{split}
\]
for all $u,v\in H^1(\Omega)$.

Since for functions in $H^1(\Omega)$ the $\NOmega{\cdot}$-norm
and the $\Nh{\cdot}$-norm coincide, from here on, we will write
$\Nh{\cdot}$ for both, whenever convenient.

\begin{theorem}\label{th:abstract}
Assume that the local stabilization forms $s^K(\cdot,\cdot)$
  are chosen in such a way that the following properties hold true:
\begin{itemize}
\item {\em continuity}:
there exists $\gamma>0$ such that, for all $u,v\in
  H^1(\calT_h)$, 
\begin{equation}\label{eq:contgamma}
\abs{a_h(u,v)}\le \gamma \Nh{u}\Nh{v};
\end{equation}

\item {\em G\r{a}rding inequality for the discrete operator}: there exists
  $\alpha>0$ such that
\begin{equation}\label{eq:gardingalpha}
Re [b_h(v,v)]
+2k^2\N{v}_{0,\Omega}^2\ge \alpha \Nh{v}^2\qquad\forall
v\in{V_p(\calT_h)}.
\end{equation}
\end{itemize}

Let $u$ be the solution to problem~\eqref{eq:varform},
and let $u_{hp}$ be
solution to the PW-VEM method~\eqref{eq:method} in $V_p(\mathcal T_h)$. 
Then, provided that $h$ is small enough with respect to $k$
(see~\eqref{eq:threshold} below), the
following error estimate holds:
\[
\Nh{u-u_{hp}}\le 
C\,\frac{1+\alpha+\gamma}{\alpha}\left(
\inf_{v_I\in V_p(\mathcal T_h)}\Nh{u-v_I}
+\inf_{v_{hp}^\ast\in V_p^\ast(\calT_h)}\Nh{u-v_{hp}^\ast}
\right),
\]
with $C>0$ independent on $h$, $k$ and $p$.
Well-posedness of the PW-VEM method directly follows.
\end{theorem}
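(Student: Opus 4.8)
The plan is to prove this via a standard Schatz-type duality argument, which is the canonical route for indefinite problems satisfying a G\r{a}rding inequality. The structure combines an abstract Strang-type consistency estimate with a duality (Aubin--Nitsche) argument to absorb the troublesome $2k^2\N{v}_{0,\Omega}^2$ term appearing in \eqref{eq:gardingalpha}.

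First I would set $e=u_{hp}-v_I$ for an arbitrary $v_I\in V_p(\calT_h)$, so that $e\in V_p(\calT_h)$, and apply the discrete G\r{a}rding inequality \eqref{eq:gardingalpha} to $e$:
\[
\alpha\Nh{e}^2\le \re[b_h(e,e)]+2k^2\N{e}_{0,\Omega}^2.
\]
The next step is to rewrite $b_h(e,e)=b_h(u_{hp}-v_I,e)$ and use the discrete equation \eqref{eq:method} together with the continuous equation \eqref{eq:varform} to replace $b_h(u_{hp},e)$ by $b(u,e)$, producing a consistency error. Here the plane wave-consistency property \eqref{eq:pwcons} is the crucial tool: by inserting an arbitrary $v_{hp}^\ast\in V_p^\ast(\calT_h)$ and writing $b_h(u_{hp},e)-b(u,e)$ in terms of the differences $b_h(v_{hp}^\ast,e)-b(v_{hp}^\ast,e)$ (which vanishes elementwise by \eqref{eq:pwcons}) plus continuity-controlled remainders involving $u-v_{hp}^\ast$ and $u-v_I$, one bounds the consistency error by $C(1+\gamma)(\Nh{u-v_I}+\Nh{u-v_{hp}^\ast})\,\Nh{e}$. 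This uses the discrete continuity \eqref{eq:contgamma} and the continuity of $b$.

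The main obstacle, and the heart of the proof, is controlling the $L^2$ term $2k^2\N{e}_{0,\Omega}^2$, since it is not dominated by $\Nh{e}^2$ with a small constant and would otherwise destroy coercivity. I would handle this by the duality argument: introduce the adjoint Helmholtz problem with right-hand side $e$, i.e.\ find $z\in H^1(\Omega)$ with $b(w,z)=2k^2\int_\Omega w\,\conj{e}\dV$ for all $w$, invoke the elliptic regularity and stability estimates for the Helmholtz problem on a convex domain (so that $z\in H^2(\Omega)$ with the $k$-explicit bound $\N{z}_{2,\Omega}\lesssim k\N{e}_{0,\Omega}$ and $\NOmega{z}\lesssim k\N{e}_{0,\Omega}$, cf.\ \cite{MEL95}), and then estimate $2k^2\N{e}_{0,\Omega}^2=b(e,z)$. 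Splitting $b(e,z)$ using the consistency machinery above against $z$ and its best approximant $z_I\in V_p(\calT_h)$, and using the approximation estimate $\inf_{z_I}\Nh{z-z_I}\le C\,h\,\N{z}_{2,\Omega}$, yields a bound of the form $k^2\N{e}_{0,\Omega}^2\le C(hk)(1+\gamma)\Nh{e}\,(\text{approximation terms})+C(hk)\Nh{e}^2$. The threshold hypothesis \eqref{eq:threshold} (that $h$ is small relative to $k$, making $hk$ small) is precisely what makes the factor $C(hk)$ small enough to absorb the $\Nh{e}^2$ contribution back into the left-hand side of the G\r{a}rding estimate.

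Finally I would combine the two ingredients: inserting the duality bound for $2k^2\N{e}_{0,\Omega}^2$ into the G\r{a}rding estimate and absorbing all $\Nh{e}^2$ terms on the right (using $hk$ small), one obtains
\[
\Nh{e}\le C\,\frac{1+\gamma}{\alpha}\left(\Nh{u-v_I}+\Nh{u-v_{hp}^\ast}\right),
\]
and the triangle inequality $\Nh{u-u_{hp}}\le \Nh{u-v_I}+\Nh{e}$ together with taking the infimum over $v_I$ and $v_{hp}^\ast$ gives the claimed estimate, with the stated constant $C(1+\alpha+\gamma)/\alpha$. Well-posedness of the discrete problem follows because, in finite dimensions, the \emph{a priori} estimate $\Nh{u_{hp}}\le C(\cdots)$ for the homogeneous data case ($g=0$, hence $u=0$) forces $u_{hp}=0$, so the square linear system \eqref{eq:method} has only the trivial kernel and is therefore invertible.
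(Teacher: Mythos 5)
Your proposal is correct and follows essentially the same route as the paper: triangle inequality plus the G\r{a}rding inequality applied to $\delta_{hp}=u_{hp}-v_I$, a consistency estimate obtained by inserting $v_{hp}^\ast$ and invoking \eqref{eq:pwcons}, and a Schatz/Aubin--Nitsche duality argument whose contribution is absorbed under the threshold \eqref{eq:threshold}. The only (minor) difference is that the paper first splits $\N{\delta_{hp}}_{0,\Omega}\le\N{u-u_{hp}}_{0,\Omega}+\N{u-v_I}_{0,\Omega}$ and applies the duality argument to the true error $u-u_{hp}$ rather than to $\delta_{hp}$ directly — when set up your way one must still route $b(\delta_{hp},\cdot)$ through $u$ to access the consistency structure, and one should note that the resulting smallness requirement is on $hk(1+hk)(1+k)$, i.e.\ essentially on $hk^2$, not merely on $hk$.
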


\begin{proof} 
By the triangle inequality, we have 
\begin{equation}\label{eq:triang}
\Nh{u-u_{hp}}\le \Nh{u-v_I}+\Nh{v_I-u_{hp}},
\end{equation}
for any $v_I\in V_p(\mathcal T_h)$. 
We set $\delta_{hp}=u_{hp}-v_I$, and proceed by estimating $\Nh{\delta_{hp}}$. 

The G\r{a}rding inequality~\eqref{eq:gardingalpha} gives
\begin{equation}\label{eq:alphadelta}
\alpha \Nh{\delta_{hp}}^2\leq Re
[b_h(\delta_{hp},\delta_{hp})]+2k^2\N{\delta_{hp}}_{0,\Omega}^2\,=:
\, \,I\,+\, II.
\end{equation}

The term $I$ can be treated as in \cite[Th. 3.1]{VEMbasic}.
Using the definition of $b_h(\cdot,\cdot)$ and the discrete
equation~\eqref{eq:method}, 
for any $v_{hp}^\ast$ in $V_p^\ast(\calT_h)$, we get
\begin{equation}\label{eq:estI1}
\begin{split}
&b_h(\delta_{hp},\delta_{hp})=b_h(u_{hp},\delta_{hp})-
a_h(v_I,\delta_{hp}) -\,ik\int_{\partial\Omega}v_I\conj{\delta}_{hp}\dS\\
&\quad =\int_{\partial\Omega}g\conj{\delta}_{hp}\dS -\sum_{K\in\calT_h}
a_h^K(v_I,\delta_{hp}) -\,ik\int_{\partial\Omega}v_I\conj{\delta}_{hp}\dS
\quad\mbox{(insert $v_{hp}^\ast$ and use {\em pw-consist.~\eqref{eq:pwcons}})}\\
&\quad =\int_{\partial\Omega}g\conj{\delta}_{hp}\dS - \sum_{K\in\calT_h} a_h^K(v_I-v_{hp}^\ast,\delta_{hp})
- \sum_{K\in\calT_h}
a^K(v_{hp}^\ast,\delta_{hp})-ik\int_{\partial\Omega}v_I\conj{\delta}_{hp}\dS
\quad \mbox{(use \eqref{eq:varform})}\\
&\quad =a(u,\delta_{hp})+ik\int_{\partial\Omega}u\,\conj{\delta}_{hp}\dS
- a_h(v_I-v_{hp}^\ast,\delta_{hp})
- \sum_{K\in\calT_h}
a^K(v_{hp}^\ast,\delta_{hp})-\,ik\int_{\partial\Omega}v_I\conj{\delta}_{hp}\dS\\
&\quad =\sum_{K\in\calT_h} a^K(u-v_{hp}^\ast,\delta_{hp})
+a_h(v_{hp}^\ast-v_I,\delta_{hp}) +ik\int_{\partial\Omega}(u-v_I)\conj{\delta}_{hp}\dS.
\end{split}
\end{equation}

In order to bound the last term, we observe that the trace inequality states that there exists a constant
$C>0$ only depending on the shape of $\Omega$ such that, for all $v\in H^1(\Omega)$,
\[
\begin{split}
k^{1/2}\N{v}_{0,\partial\Omega}&\le C
k^{1/2}\left(\diam(\Omega)^{-1/2}\N{v}_{0,\Omega}+
  \N{v}_{0,\Omega}^{1/2}\N{\nabla v}_{0,\Omega}^{1/2}\right)\\
&\le
C\left[k^{-1/2}\diam(\Omega)^{-1/2}\N{v}_{1,k,\Omega}+\left(\frac{k}{2}\N{v}_{0,\Omega}+\frac{1}{2}\N{\nabla
      v}_{0,\Omega}\right)\right]\\
&\le C\left(\diam(\Omega)^{-1/2} k^{-1/2}\N{v}_{1,k,\Omega}+\N{v}_{1,k,\Omega}\right).
\end{split}
\] 
Due to the high-frequency assumption, it holds
$k^{-1}\diam(\Omega)^{-1}<1/2\pi$, and we conclude that there exists a constant
$C>0$ only depending on $\Omega$ such that, for all $v\in H^1(\Omega)$,
\[
k^{1/2}\N{v}_{0,\partial\Omega}\le C\N{v}_{1,k,\Omega}.
\]

Thus, since both $(u-v_I)$ and $\delta_{hp}$ belong to $H^1(\Omega)$,  we
have
\begin{equation}\label{eq:boundest1}
\begin{split}
\big|ik\int_{\partial\Omega}(u-v_I)\conj{\delta}_{hp}\dS\big|&\le
k\N{u-v_I}_{0,\partial\Omega}\N{\delta_{hp}}_{0,\partial\Omega}\\
&\le C \NOmega{u-v_I}\NOmega{\delta_{hp}},
\end{split}
\end{equation}
with a constant $C>0$ only depending on $\Omega$.

From~\eqref{eq:estI1} and~\eqref{eq:boundest1}, using the continuity
of the bilinear form and the continuity
property~\eqref{eq:contgamma}, we
can conclude with the following estimate of the term $I$:
\[
\begin{split}
I&
=\re [b_h(\delta_{hp},\delta_{hp})]
\le C_I(1+\gamma)(\Nh{u-v_{hp}^\ast}+\Nh{u-v_I})\Nh{\delta_{hp}}.
\end{split}
\]

For the term $II$, we have
\[
\begin{split}
II=2k^2\N{\delta_{hp}}_{0,\Omega}^2&\le
2k^2\N{\delta_{hp}}_{0,\Omega}(\N{u-u_{hp}}_{0,\Omega}+\N{u-v_I}_{0,\Omega})\\
&\le 2k\Nh{\delta_{hp}}(\N{u-u_{hp}}_{0,\Omega}+\N{u-v_I}_{0,\Omega}),
\end{split}
\]
due to $k \N{\delta_{hp}}_{0,\Omega}\le \Nh{\delta_{hp}}$.
Inserting the bounds for $I$ and $II$ into~\eqref{eq:alphadelta} gives
\begin{equation}\label{eq:bounddelta}
\begin{split}
\alpha\Nh{\delta_{hp}}&\le C_I(1+\gamma)(\Nh{u-v_{hp}^\ast}+\Nh{u-v_I})+2k
(\N{u-u_{hp}}_{0,\Omega}+\N{u-v_I}_{0,\Omega})\\
&\le C_{II}(1+\gamma) (\Nh{u-v_{hp}^\ast}+\Nh{u-v_I})+2k \N{u-u_{hp}}_{0,\Omega},
\end{split}
\end{equation}
where we have used again $k \N{u-v_I}_{0,\Omega}\le \Nh{u-v_I}$.

The term $\N{u-u_{hp}}_{0,\Omega}$ can be estimated by using a duality argument.
Let us denote by $\psi$
the solution of the dual problem
\begin{equation}\label{eq:dual}
b(v,\psi)=
\int_{\Omega}
v\,\conj{(u-u_{hp})}\dV\qquad \forall v\in H^1(\Omega).
\end{equation}
Since $\Omega$ is assumed to be convex,
$\psi$ belongs to $H^2(\Omega)$ and satisfies

\begin{equation}\label{apriori_dual}
\begin{split}
&\Nh{\psi}\leq C\N{u-u_{hp}}_{0,\Omega},\\
&\abs{\psi}_{2,\Omega} \leq C(1+k)\N{u-u_{hp}}_{0,\Omega};
\end{split}
\end{equation}
see~\cite[Prop. 8.1.4]{MEL95}.
%

In correspondence to $\psi\in H^2(\Omega)$, there exist
$\psi_{hp}^\ast\in V^\ast_{p}(\calT_h)$ and $\psi_I\in V_p(\calT_h)$ and 
$\psi_I\in
V_p(\calT_h)$ such that
\begin{equation}\label{eq:approxdual}
\begin{split}
\Nh{\psi-\psi_{hp}^\ast}&\le C(1+hk)\,h\N{\psi}_{2,k,\Omega}, \\
\Nh{\psi-\psi_I}&\le C(1+hk) \,h\N{\psi}_{2,k,\Omega},
\end{split}
\end{equation}
with $C>0$ independent of $h$, $k$ and $\psi$.
The first bound follows as in~\cite[Prop. 3.12 and 3.13]{GHP09}.
The second one follows from combining \cite[Th. 2.1]{BAM96} with the local
approximation estimates in plane wave spaces.

Using (\ref{eq:dual}) with $v=u-u_{hp}$, and inserting $\psi_I$, we have
\begin{equation*}
\begin{split}
\N{u-u_{hp}}_{0,\Omega}^2=\,& b(u-u_{hp},\psi)=b(u-u_{hp},\psi-\psi_I)+b(u-u_{hp},\psi_I)\\
&\hspace{-0.75truecm}\overset{\eqref{eq:varform},\eqref{eq:method}}{=}\, b(u-u_{hp},\psi-\psi_I)+\int_{\partial\Omega}g\conj{\psi}_I\dS-b(u_{hp},\psi_I)\\
& +b_h(u_{hp},\psi_I)-\int_{\partial\Omega}g\conj{\psi}_I\dS\\
=\, &b(u-u_{hp},\psi-\psi_I)-a(u_{hp},\psi_I)-ik\int_{\partial\Omega}u_{hp}\conj{\psi}_I\dS\\
& +a_h(u_{hp},\psi_I) +ik\int_{\partial\Omega}u_{hp}\conj{\psi}_I\dS\\
=\,& b(u-u_{hp},\psi-\psi_I) +(a_h(u_{hp},\psi_I)-a(u_{hp},\psi_I))
=:III+IV.
\end{split}
\end{equation*}

The term $III$ is bounded using the continuity of the continuous form $b(\cdot,\cdot)$, 
the second estimate 
in~\eqref{eq:approxdual}
and the regularity bounds (\ref{apriori_dual}):
\begin{eqnarray*}
III &=&b(u-u_{hp},\psi-\psi_I) \leq C \Nh{u-u_{hp}} \NOmega{\psi-\psi_I}\\
& \leq& C_{III}\, (1+hk)\,h\,(1+k)\,
\Nh{u-u_{hp}}\N{u-u_{hp}}_{0,\Omega}.
\end{eqnarray*}

For the term $IV$, 
using the {\it pw-consistency} property~\eqref{eq:pwcons} 
and the continuity of the discrete forms,
 we obtain
\begin{eqnarray}\label{IV}
IV&=&a_h(u_{hp},\psi_I)-a(u_{hp},\psi_I)=\sum_{K\in\calT_h} (a_h^K(u_{hp},\psi_I)-a^K(u_{hp},\psi_I))
\nonumber\\
&=&\sum_{K\in\calT_h} (a_h^K(u_{hp}-v_{hp}^\ast,\psi_I)-a^K(u_{hp}-v_{hp}^\ast,\psi_I))\\
&=&\sum_{K\in\calT_h} (a_h^K(u_{hp}-v_{hp}^\ast,\psi_I-\psi_{hp}^\ast)-a^K(u_{hp}-v_{hp}^\ast,\psi_I
-\psi_{hp}^\ast)) \nonumber\\
&\le& 
{(1+\gamma)}\Nh{u_{hp}-v_{hp}^\ast}\Nh{\psi_I-\psi_{hp}^\ast}. \nonumber
\end{eqnarray}
Next, using the bounds in~\eqref{eq:approxdual}
and (\ref{apriori_dual}), we have
\begin{eqnarray*}
\Nh{\psi_I-\psi_{hp}^\ast} &\leq& \Nh{\psi_I- \psi}+\Nh{\psi-\psi_{hp}^\ast}\\
&\leq&C\, (1+hk) \,h\, (1+k)\,\N{u-u_{hp}}_{0,\Omega},
\end{eqnarray*}
that we insert in (\ref{IV}) getting
\begin{equation*}
IV \leq C_{IV}\, {(1+\gamma)\,} (1+hk) \,h\, (1+k)\,( \Nh{u-u_{hp}}+\Nh{u-v_{hp}^\ast})  \N{u-u_{hp}}_{0,\Omega}.
\end{equation*}

Therefore, we conclude with the following bound for $\N{u-u_{hp}}_{0,\Omega}$:
\begin{equation}\label{eq:L2-error}
\N{u-u_{hp}}_{0,\Omega}\leq (C_{III}+C_{IV})\,{(1+\gamma)\,} (1+hk) \,h\, (1+k)\,( \Nh{u-u_{hp}}+\Nh{u-v_{hp}^\ast}),
\end{equation}
which, inserted into~\eqref{eq:bounddelta}, gives
\[
\begin{split}
\alpha\Nh{\delta_{hp}}\le&\, C_{II}\,(1+\gamma)(\Nh{u-v_{hp}^\ast}+\Nh{u-v_I})\\
&+C_{\rm dual}\,{(1+\gamma)\,} k\,
(1+hk) \,h\, (1+k)\,(\Nh{u-u_{hp}}+\Nh{u-v_{hp}^\ast}),
\end{split}
\]
($C_{\rm dual}=2(C_{III}+C_{IV})$)
and thus, owing to~\eqref{eq:triang},
\[
\begin{split}
\alpha\Nh{u-u_{hp}}\le &\, C_{II}\,(1+\alpha+\gamma)(\Nh{u-v_{hp}^\ast}+\Nh{u-v_I})\\
&+C_{\rm dual}\,{(1+\gamma)\,} k\,
(1+hk) \,h\, (1+k)\,(\Nh{u-u_{hp}}+\Nh{u-v_{hp}^\ast}).
\end{split}
\]
Under the assumption
\begin{equation}\label{eq:threshold}
C_{\rm dual}\,{(1+\gamma)\,}(1+hk) \,hk\, (1+k)\le \frac{\alpha}{2},
\end{equation}
we can take the $\Nh{u-u_h}$ term to
the left-hand side and obtain
\[
\Nh{u-u_{hp}}\le
C\,\frac{1+\alpha+\gamma}{\alpha}\,(\Nh{u-v_{hp}^\ast}+\Nh{u-v_I}),
\]
with $C=2C_{II}+1$,
which completes the proof.
\end{proof}

\begin{remark}
From~\eqref{eq:threshold}, it is clear that the threshold condition
on $h$ required in Theorem~\ref{th:abstract} is that $(1+hk) \,hk\, (1+k)$ be sufficiently small,
which, in the relevant case of large $k$, is equivalent to requiring that
$hk^2$ be sufficiently small. 
This reflects the {\em pollution
effect}  of the $h$-version of the PW-VEM \cite{BAS00}. 
In fact, while a condition
on $hk$ is enough for the convergence of the best approximation
(see Proposition~\ref{prop:bestappr} below), a stronger condition (namely, on $hk^2$)
is required for the convergence of the method.
\end{remark}


The abstract convergence result of Theorem~\ref{th:abstract}, combined
with best approximation estimates of Helmholtz solutions within 
$V_p(\calT_h)$ and $V_p^{\ast}(\calT_h)$, gives convergence rates.

In order to state the following results,
we define the weighted norm 
\[
\N{u}_{s,k,\Omega}^2=\sum_{j=0}^s k^{2(s-j)}\abs{u}_{j,\Omega}^2,
\]
where $\abs{\cdot}_{j,\Omega}$ denotes the standard seminorm in
$H^j(\Omega)$.

\begin{proposition}\label{prop:bestappr}
Let $u$ be a function in $H^{\ell+1}(\Omega)$, $\ell\ge 0$, such that $\Delta u +k^2
u=0$ in $\Omega$. Then there exist $u_{hp}^\ast\in
V_p^{\ast}(\calT_h)$ and $u_I\in V_p(\calT_h)$, with $p=2m+1$, such
that 
\[
\begin{split}
\Nh{u-u_{hp}^\ast} &\le C\,\eta(hk)\,
h^{\min\{m,\ell\}}\N{u}_{\min\{m,\ell\}+1,k,\Omega},\\
\Nh{u-u_I} &\le C\,\eta(hk)\,
h^{\min\{m,\ell\}}\N{u}_{\min\{m,\ell\}+1,k,\Omega},
\end{split}
\]
with $C>0$ independent of $h$, $k$ and $u$ and
\begin{equation*}
\eta(hk)=
\left(1+(hk)^{m+9}\right)e^{\left(\frac{7}{4}-\frac{3}{4}\rho\right)hk}.
\end{equation*}
\end{proposition}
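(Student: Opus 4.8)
The plan is to combine local best-approximation estimates for plane waves (valid for Helmholtz solutions) with a partition-of-unity gluing argument, in the spirit of the PUM approximation theory \cite{BAM96}. The two approximants are built from the same local ingredient, so the core is a single local result applied on elements (for $u_{hp}^\ast$) and on vertex patches (for $u_I$).

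\textbf{The piecewise approximant.} Since $V_p^\ast(\calT_h)$ carries no interelement continuity constraint, I would construct $u_{hp}^\ast$ element by element. On each $K\in\calT_h$ the restriction $u|_K$ is a Helmholtz solution, so I would invoke the local best-approximation result for plane waves (\cite{PWapprox,AndreaPhD}), which under assumptions {\em i)}--{\em iii)} (star-shapedness with parameter $\rho$, $p=2m+1$ directions obeying the minimum-angle condition) yields $u_K^\ast\in V_p^\ast(K)$ with
\[
\N{u-u_K^\ast}_{1,k,K}\le C\,\eta(hk)\,h^{\min\{m,\ell\}}\N{u}_{\min\{m,\ell\}+1,k,K}.
\]
Setting $u_{hp}^\ast|_K=u_K^\ast$, squaring, and summing over $K$ gives the first estimate.

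\textbf{The conforming approximant.} Here I would exploit that $V_p(K)$ has exactly the PUM structure: its generators are $\varphi_j$ times plane waves centered at the vertex $V_j$. For each vertex I set $\omega_j=\mathrm{supp}\,\varphi_j$ (the union of elements meeting at $V_j$) and, applying the same plane-wave theory on $\omega_j$, obtain a local plane wave
\[
u_j^\ast(\bx)=\sum_{\ell=1}^{p}a_{j\ell}\,e^{ik\bd_\ell\cdot(\bx-\bx_j)}
\]
with $\N{u-u_j^\ast}_{1,k,\omega_j}\le C\,\eta(hk)\,h^{\min\{m,\ell\}}\N{u}_{\min\{m,\ell\}+1,k,\omega_j}$. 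I then define $u_I=\sum_j\varphi_j u_j^\ast$; because the $\varphi_j$ form a continuous partition of unity and the plane waves are entire, $u_I\in V_p(\calT_h)$, and on each element it has precisely the admissible form $\sum_{j,\ell}a_{j\ell}\varphi_j\,e^{ik\bd_\ell\cdot(\bx-\bx_j)}$. Using $\sum_j\varphi_j\equiv 1$, I would write $u-u_I=\sum_j\varphi_j(u-u_j^\ast)$ and bound the broken norm by
\[
\Nh{u-u_I}^2\le C\sum_j\Big(\N{\nabla\varphi_j}_{L^\infty}^2\,\N{u-u_j^\ast}_{0,\omega_j}^2+\N{u-u_j^\ast}_{1,k,\omega_j}^2\Big).
\]
The bounded overlap of the patches $\omega_j$ (mesh regularity {\em i)}) then lets me sum the local estimates. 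The only potentially dangerous term is $\N{\nabla\varphi_j}_{L^\infty}\sim h_K^{-1}$ multiplying $\N{u-u_j^\ast}_{0,\omega_j}$; but the $k$-weighted local estimate supplies the factor $k^{-1}$ on the $L^2$ part, so this term is controlled by $(h_Kk)^{-1}$ times $\N{u-u_j^\ast}_{1,k,\omega_j}$, which under the $h_Kk$-threshold merely contributes an extra $hk$-factor absorbed into $\eta$. This yields the second estimate.

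\textbf{Main obstacle.} The essential difficulty is not the gluing but obtaining the precise explicit prefactor $\eta(hk)=(1+(hk)^{m+9})e^{(7/4-3\rho/4)hk}$. The exponential in $hk$ and the polynomial degree $m+9$ emerge from the sharp constants in the Jacobi--Anger / plane-wave approximation theory, where the star-shapedness parameter $\rho$ enters the exponent. The delicate bookkeeping is to verify that the partition-of-unity step multiplies these constants only by factors that can be reabsorbed into the same $\eta$, rather than degrading the exponential rate; this is what forces the two constructions to share the identical prefactor in the statement.
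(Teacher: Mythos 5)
Your overall route is the same as the paper's: the paper proves the first bound by invoking the local plane wave approximation estimates of Hiptmair--Moiola--Perugia element by element, and the second by combining those local estimates with the partition-of-unity approximation theorem \cite[Th.~2.1]{BAM96}; you have simply unfolded the proof of that PUM theorem instead of citing it. However, there is a genuine error in your treatment of the only delicate term. You bound $\N{\nabla\varphi_j}_{L^\infty}\N{u-u_j^\ast}_{0,\omega_j}\lesssim h^{-1}\,k^{-1}\N{u-u_j^\ast}_{1,k,\omega_j}=(hk)^{-1}\N{u-u_j^\ast}_{1,k,\omega_j}$ and then claim that $(hk)^{-1}$ ``merely contributes an extra $hk$-factor absorbed into $\eta$.'' It does not: under the threshold $hk\le\alpha_0$ the quantity $(hk)^{-1}$ is bounded \emph{below}, not above, and it diverges like $h^{-1}$ as $h\to0$ at fixed $k$. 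Carried through, your argument would yield $\Nh{u-u_I}\lesssim \eta(hk)\,k^{-1}h^{\min\{m,\ell\}-1}\N{u}_{\min\{m,\ell\}+1,k,\Omega}$, i.e.\ a loss of one full order in $h$ compared with the claimed estimate.

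The correct mechanism --- and the one built into \cite[Th.~2.1]{BAM96} --- is that the plane wave approximation theory gives the local $L^2$-error one power of $h$ \emph{better} than the $H^1$-error: $\N{u-u_j^\ast}_{0,\omega_j}\le C\,\eta(hk)\,h^{\min\{m,\ell\}+1}\N{u}_{\min\{m,\ell\}+1,k,\omega_j}$. This extra factor of $h$ exactly cancels $\N{\nabla\varphi_j}_{L^\infty}\sim h^{-1}$, so the gradient-of-the-cutoff term is of the same order $h^{\min\{m,\ell\}}$ as the rest. You should therefore quote both the $j=0$ and $j=1$ local estimates rather than deriving the $L^2$ bound from the weighted $\N{\cdot}_{1,k}$ bound. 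A secondary point worth a sentence in a complete write-up: the local plane wave estimates are stated for star-shaped domains, so applying them on the vertex patches $\omega_j$ (rather than on single elements) requires the patches to inherit suitable shape regularity from assumption \emph{i)}; this is also implicit in the paper's citation of \cite{BAM96} and is where the constant's dependence on $\rho$ must be tracked to recover the stated $\eta(hk)$.
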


\begin{proof}
The first bound follows from the local approximation estimates in plane wave
spaces of \cite[Th. 3.2.2]{HMP09b}
(see also \cite[p. 831]{PWapprox}, \cite{AndreaPhD}).
The second bound can be obtained from
the local approximation estimates in plane wave spaces by
\cite[Th. 2.1]{BAM96}.
\end{proof}

Theorem~\ref{th:abstract} and Proposition~\ref{prop:bestappr}
immediately give the following convergence result.

\begin{corollary}\label{cor:rates}
Under the assumptions of Theorem~\ref{th:abstract}, if the solution 
$u$ to problem~\eqref{eq:varform} belongs to $H^{\ell+1}(\Omega)$,
$\ell\ge 1$, then, provided that the threshold
condition~\eqref{eq:threshold}
is satisfied, the following error estimate holds:
\[
\Nh{u-u_{hp}}\le C\,
\frac{1+\alpha+\gamma}{\alpha}
\,\eta(hk)\,
                 h^{\min\{m,\ell\}}\N{u}_{\min\{m,\ell\}+1,k,\Omega},
\]
with $C>0$ independent on $h$, $k$ and $p=2m+1$, and $\eta(hk)$ as in Proposition~\ref{prop:bestappr}.
\end{corollary}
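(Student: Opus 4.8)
The plan is to derive Corollary~\ref{cor:rates} by simply feeding the best-approximation estimates of Proposition~\ref{prop:bestappr} into the abstract error bound of Theorem~\ref{th:abstract}; since the statement is asserted to follow immediately, no new estimation is expected.

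First I would check that Proposition~\ref{prop:bestappr} is applicable to the exact solution. The solution $u$ of~\eqref{eq:varform} satisfies $-\Delta u-k^2 u=0$ in $\Omega$ by the first equation of~\eqref{eq:helm}, i.e.\ it lies in the kernel of the Helmholtz operator, which is exactly the hypothesis of the proposition. Together with the assumed regularity $u\in H^{\ell+1}(\Omega)$, $\ell\ge 1$, this produces, for $p=2m+1$, specific approximants $u_I\in V_p(\calT_h)$ and $u_{hp}^\ast\in V_p^\ast(\calT_h)$ satisfying
\[
\Nh{u-u_I},\ \Nh{u-u_{hp}^\ast}\le C\,\eta(hk)\,h^{\min\{m,\ell\}}\N{u}_{\min\{m,\ell\}+1,k,\Omega}.
\]

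Next I would invoke Theorem~\ref{th:abstract}, whose hypotheses (continuity~\eqref{eq:contgamma}, the discrete G\r{a}rding inequality~\eqref{eq:gardingalpha}, and the threshold condition~\eqref{eq:threshold}) are carried over as standing assumptions of the corollary. It bounds $\Nh{u-u_{hp}}$ by $C\,\frac{1+\alpha+\gamma}{\alpha}$ times the sum of the two infima over $V_p(\calT_h)$ and $V_p^\ast(\calT_h)$. I would then bound each infimum from above by the particular approximant supplied above — an infimum is no larger than the value at any admissible element — and, since both share the same right-hand side, absorb the factor $2$ into the constant $C$. This reproduces the claimed estimate, with the prefactor $\frac{1+\alpha+\gamma}{\alpha}$ and the same $\eta(hk)$ and rate $h^{\min\{m,\ell\}}$.

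There is no genuine obstacle here; the only point meriting a word is bookkeeping of the convergence order. Assumption~{\em ii)} forces $m\ge 1$, and the hypothesis $\ell\ge 1$ then gives $\min\{m,\ell\}\ge 1$, so the bound delivers a strictly positive power of $h$ rather than the degenerate $h^0$ obtained at $\ell=0$. All the analytic work — the duality argument establishing the $L^2$-control in Theorem~\ref{th:abstract}, and the plane-wave approximation theory underlying Proposition~\ref{prop:bestappr} — has already been discharged, so the corollary is a direct composition of the two results.
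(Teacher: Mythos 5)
Your proposal is correct and matches the paper's own (implicit) argument: the paper states that Theorem~\ref{th:abstract} and Proposition~\ref{prop:bestappr} ``immediately give'' the corollary, which is precisely the composition you carry out — verify that $u$ solves the homogeneous Helmholtz equation so the best-approximation result applies, then bound the two infima in the abstract estimate by the specific approximants $u_I$ and $u_{hp}^\ast$. No further comment is needed.
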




\section{Stabilization term}\label{sec:stab}
Let us give a sufficient condition 
on the stabilization term $s^K((I-\Pi)u,(I-\Pi)v))$ in order to guarantee the G\r{a}rding inequality for
the discrete operator. To this aim, we first state the
following lemma.

\begin{lemma}\label{lemma:doubleprod}
For $u \in V_p(K)$, we have
\begin{equation*}
k^2 \int_K (u-\Pi u) \,\conj{ w} \dV=
\int_K \nabla(u -\Pi u) \cdot  \conj{
\nabla w} \dV
\qquad \forall w \in V^\ast_{p}(K).
\end{equation*}
\end{lemma}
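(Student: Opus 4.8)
The plan is to unwind the definition of the projector $\Pi$ and exploit the Trefftz property of the space $V_p^\ast(K)$. Recall that $\Pi u \in V_p^\ast(K)$ is characterized by $a^K(\Pi u, w) = a^K(u, w)$ for every $w \in V_p^\ast(K)$, which by bilinearity is equivalent to the Galerkin orthogonality $a^K(u - \Pi u, w) = 0$ for all $w \in V_p^\ast(K)$. Writing out the definition of $a^K$, this orthogonality reads
\[
\int_K \nabla(u - \Pi u) \cdot \conj{\nabla w} \dV - k^2 \int_K (u - \Pi u)\,\conj{w}\dV = 0
\qquad \forall w \in V_p^\ast(K),
\]
and rearranging the two terms immediately yields the claimed identity. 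So the statement is, in essence, just the definition~\eqref{def:proj} of $\Pi$ rewritten, restricted to test functions $w$ in the high-frequency space.

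More precisely, first I would fix $u \in V_p(K)$ and an arbitrary $w \in V_p^\ast(K)$. Since $\Pi u \in V_p^\ast(K)$ by definition of the projector, I can use $w$ as the test function in~\eqref{def:proj} to get $a^K(\Pi u, w) = a^K(u, w)$. By linearity in the first argument this gives $a^K(u - \Pi u, w) = 0$. Substituting the explicit bilinear form,
\[
\int_K \nabla(u - \Pi u)\cdot \conj{\nabla w}\dV = k^2 \int_K (u - \Pi u)\,\conj{w}\dV,
\]
which is exactly the assertion of the lemma after swapping the sides of the equation.

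The one point that deserves a remark is that $\Pi$ must be well-defined for the statement to make sense; this is guaranteed under the threshold condition~\eqref{eq:conditionhk} by Proposition~\ref{prop:contPi}, which I would invoke (or simply assume, as it holds throughout this part of the paper). Beyond that, there is no genuine obstacle: the result is a direct algebraic consequence of the Galerkin-type definition of the projection, and no approximation, regularity, or inf-sup argument enters. The only thing to be careful about is the placement of the factor $k^2$ and the sign convention in $a^K$, so that the rearrangement of the two integral terms produces precisely the stated form rather than its negative.
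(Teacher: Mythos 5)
Your proof is correct and is essentially identical to the paper's: both simply rewrite the defining relation~\eqref{def:proj} of $\Pi$ with the explicit form of $a^K(\cdot,\cdot)$ and rearrange the two integral terms. (The ``Trefftz property'' you mention in your opening sentence is not actually needed or used anywhere in the argument, as your own write-up confirms.)
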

\begin{proof}
The explicit form of the bilinear form that defines the projector $\Pi$ (see \eqref{def:proj}) gives, for all $w \in V^\ast_{p}(K)$,
\begin{equation*}
\int_K \nabla \Pi u \cdot \conj{\nabla w} \dV -
k^2 \int_K \Pi u \,\conj{w} \dV = \int_K \nabla u \cdot \conj{\nabla w} \dV
 - k^2 \int_K u \,\overline w \dV ,
\end{equation*}
that implies the assertion.
\end{proof}

\begin{proposition}\label{prop:garding}
If the stabilization form satisfies the following condition: there
exists $\alpha_s>0$ such that, for all $K\in\calT_h$ and $v\in V_p(K)$,
\begin{equation}\label{eq:stabil_hyp}
 s^K((I-\Pi)v,(I-\Pi)v) \geq \alpha_s \N{\nabla(I-\Pi)v}_{0,K}^2,
\end{equation}
then the G\r{a}rding inequality for the discrete operator holds true:
\begin{equation*}
\re[b_h(v,v)]+2 k^2 \N{v}_{0,\Omega}^2 \geq \min\{\alpha_s,1\}\Nh{v}^2 \quad \forall
v \in V_p(\mathcal T_h).
\end{equation*}
\end{proposition}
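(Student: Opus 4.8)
The plan is to expand $\re[b_h(v,v)] + 2k^2\N{v}_{0,\Omega}^2$ element by element and show that the stabilization hypothesis~\eqref{eq:stabil_hyp} exactly compensates for the negative mass-term contributions coming from the projected part. First I would note that the boundary term $ik\int_{\partial\Omega}v\conj v\,\dS$ is purely imaginary, so it drops out of $\re[b_h(v,v)]$; what remains is $\re[a_h(v,v)]+2k^2\N{v}_{0,\Omega}^2 = \sum_{K\in\calT_h}\big(\re[a_h^K(v,v)]+2k^2\N{v}_{0,K}^2\big)$. Thus it suffices to prove, for each $K$, the local inequality
\[
\re[a_h^K(v,v)] + 2k^2\N{v}_{0,K}^2 \ge \min\{\alpha_s,1\}\,\N{v}_{1,k,K}^2.
\]

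Next I would use the definition $a_h^K(v,v)=a^K(\Pi v,\Pi v)+s^K((I-\Pi)v,(I-\Pi)v)$. Since $s^K$ is real-valued on the diagonal, its real part is itself, and the stabilization hypothesis~\eqref{eq:stabil_hyp} bounds it below by $\alpha_s\N{\nabla(I-\Pi)v}_{0,K}^2$. For the consistency term, writing out $a^K(\Pi v,\Pi v)=\N{\nabla\Pi v}_{0,K}^2-k^2\N{\Pi v}_{0,K}^2$ shows it is already real. The main difficulty is the negative term $-k^2\N{\Pi v}_{0,K}^2$, which I must convert into something controllable. The key device here is Lemma~\ref{lemma:doubleprod}: taking $w=\Pi v\in V_p^\ast(K)$ in that identity gives $k^2\int_K(v-\Pi v)\conj{\Pi v}\,\dV=\int_K\nabla(v-\Pi v)\cdot\conj{\nabla\Pi v}\,\dV$, i.e. an orthogonality relation coupling the $L^2$ and $H^1$ discrepancies between $v$ and $\Pi v$.

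The crux of the argument will be reassembling the full weighted norm $\N{v}_{1,k,K}^2=\N{\nabla v}_{0,K}^2+k^2\N{v}_{0,K}^2$ out of the projected and non-projected pieces using that orthogonality. The natural route is a Pythagoras-type decomposition: I expect that Lemma~\ref{lemma:doubleprod} forces $\N{\nabla v}_{0,K}^2=\N{\nabla\Pi v}_{0,K}^2+\N{\nabla(I-\Pi)v}_{0,K}^2$ in the real part, and similarly lets me rewrite $k^2\N{v}_{0,K}^2$ in terms of $k^2\N{\Pi v}_{0,K}^2$ plus a gradient-discrepancy term. Combining these with the factor-$2$ extra mass term should cancel the problematic $-k^2\N{\Pi v}_{0,K}^2$ entirely, leaving $\re[a^K(\Pi v,\Pi v)]+2k^2\N{v}_{0,K}^2=\N{\nabla\Pi v}_{0,K}^2+k^2\N{v}_{0,K}^2+(\text{nonnegative gradient-discrepancy terms})$. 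Adding the stabilization lower bound $\alpha_s\N{\nabla(I-\Pi)v}_{0,K}^2$ then supplies exactly the gradient contribution $\N{\nabla(I-\Pi)v}_{0,K}^2$ needed (up to the factor $\alpha_s$) to complete $\N{\nabla v}_{0,K}^2$; factoring out $\min\{\alpha_s,1\}$ closes the estimate. The delicate bookkeeping step — making sure the algebra with Lemma~\ref{lemma:doubleprod} produces nonnegative leftover terms rather than sign-indefinite ones — is where I would concentrate the care, but no inequality beyond~\eqref{eq:stabil_hyp} and the cited lemma should be required.
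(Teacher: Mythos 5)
Your plan is essentially the paper's proof: you drop the purely imaginary boundary term, localize, invoke the hypothesis~\eqref{eq:stabil_hyp} on $s^K$, and use Lemma~\ref{lemma:doubleprod} with $w=\Pi v$ to handle the coupling between $\Pi v$ and $(I-\Pi)v$, which is exactly the route taken in Section~\ref{sec:stab}. The one imprecision is your expected ``Pythagoras-type'' identity $\N{\nabla v}_{0,K}^2=\N{\nabla\Pi v}_{0,K}^2+\N{\nabla(I-\Pi)v}_{0,K}^2$ and the claim that the leftover terms are individually nonnegative: the cross terms do not vanish (nor is $\N{\Pi v}_{0,K}\le\N{v}_{0,K}$, since $\Pi$ is not an $L^2$-projection); rather, the lemma converts the sign-indefinite mass cross term $2k^2\re\int_K\Pi v\,\conj{(I-\Pi)v}\dV$ into the gradient cross term $2\re\int_K\nabla\Pi v\cdot\conj{\nabla(I-\Pi)v}\dV$, and the paper concludes by weighting the nonnegative groups with $\alpha=\min\{\alpha_s,1\}\le 1$ and re-completing the two squares $\N{\nabla v}_{0,K}^2$ and $k^2\N{v}_{0,K}^2$, after which the genuinely nonnegative discards are $(\alpha_s-\alpha)\N{\nabla(I-\Pi)v}_{0,K}^2$ and $\alpha k^2\N{(I-\Pi)v}_{0,K}^2$.
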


\begin{proof}
We will make use of the trivial identity
\begin{equation}\label{norm_squared}
\N{v_1+v_2}_{0,K}^2=\N{v_1}_{0,K}^2+\N{v_2}_{0,K}^2+2 \re \Big[\int_K v_1\conj{v}_2\dV\Big].
\end{equation}

Let us define  $\alpha=\min\{\alpha_s,1\}$. 
Due to 
$\alpha\le 1$,
definition (\ref{def:ah}) of $b_h(\cdot,\cdot)$ and 
identity~\eqref{norm_squared}
with $v_1=\Pi v$ and $v_2=(I-\Pi)v$,
we have
\begin{equation*}
\begin{split}
&\re[b_h(v,v)]+2 k^2 \N{v}_{0,\Omega}^2 = 
a_h(v,v)+2 k^2 \N{v}_{0,\Omega}^2\\
&\quad\geq \sum_{K\in\calT_h}  \left[\alpha  a^K(\Pi v, \Pi v) + 2 \alpha  k^2\N{\Pi v}^2_{0,K}\right] \\
&\qquad+ \sum_{K\in\calT_h}
\left[s^K((I-\Pi)v,(I-\Pi)v))
+ 2 \alpha k^2\N{(I-\Pi)v}^2_{0,K}\right] \\
&\qquad+ \sum_{K\in\calT_h} 4 \alpha k^2 \re \Big[\int_K \Pi v\,
\conj{(I-\Pi)v} \dV \Big]\\
&\quad\geq\alpha \sum_{K\in\calT_h} \left[\N{\nabla\Pi
      v}_{0,K}^2+k^2\N{\Pi v}^2_{0,K}\right]
+\sum_{K\in\calT_h}
  \left[\alpha_s\N{\nabla(I-\Pi)v}_{0,\Omega}^2 +2\alpha
    k^2\N{(I-\Pi)v}^2_{0,K}\right]\\
&\qquad+\sum_{K\in\calT_h} 
2 \alpha Re \Big[\int_K \nabla\Pi v\cdot
\conj{\nabla(I-\Pi)v} \dV\Big]
+\sum_{K\in\calT_h}  2 \alpha k^2 Re \Big[\int_K \Pi v\,
\conj{(I-\Pi)v} \dV\Big],
%
\end{split}
\end{equation*}
where in the second inequality we have used~\eqref{eq:stabil_hyp} and
Lemma~\ref{lemma:doubleprod}. 
By using~\eqref{norm_squared} with $v_1=\nabla\Pi v$, $v_2=\nabla
(I-\Pi)v$, and then with $v_1=\Pi v$, $v_2=(I-\Pi)v$, we conclude
\begin{equation*}
\re[b_h(v,v)]+2 k^2 \N{v}_{0,\Omega}^2 \ge
\min\{\alpha_s,1\}(\N{\nabla
  v}_{0,\Omega}^2+k^2\N{v}_{0,\Omega}^2)=
\min\{\alpha_s,1\}\Nh{v}^2.
\end{equation*}
\end{proof}

Under the assumption of Proposition~\ref{prop:garding}, the
G\r{a}rding inequality of Theorem~\ref{th:abstract} holds true with
$\alpha=\min\{\alpha_s,1\}$.

\subsection{Choice of $s^K(\cdot,\cdot)$}\label{sec:choice}
A first attempt in the choice of the stabilization term could be done by 
defining
\begin{equation}\label{eq:GRAD_Tr}
s^K((I-\Pi)u,(I-\Pi)v)=\int_K \nabla(I-\Pi)u\cdot\conj{\nabla(I-\Pi)v}\dV,
\end{equation}
so that (\ref{eq:stabil_hyp}) is satisfied with the equality and
$\alpha_s=1$.

We notice that the continuity of the discrete bilinear form $a_h(.,.)$ follows from the continuity of the continuous bilinear forms and the continuity of the operator $\Pi$ (see Proposition \ref{prop:contPi}).
Therefore,~\eqref{eq:contgamma} holds with 
$\gamma\ge
  1+2(\beta_{\min}^{-2}+\beta_{\min}^{-1})$, where $\beta_{\min}=\min_{K\in\calT_h}\beta(h_Kk)=\beta(hk)$.

The matrix form of this stabilization term is
\[
(\conj{I-P})^T A (I-P),
\]
where 
where $I$ is the identity matrix of appropriate size (number of directions $p$
  times number of mesh vertices $n_K$), $P$ is the
matrix representing the operator~$\Pi$ defined in Section~\ref{sec:RN}
and $A$ is the matrix of entries
\[
A(r,s)=\int_K \nabla\psi_s\cdot\conj{\nabla\psi}_r\,dV.
\]
However, when $K$ is a generic polygon, we do not resort to explicit expressions
of the basis functions $\varphi_j$ of $\VK$, and thus of $\psi_r$, but
we approximate~\eqref{eq:GRAD_Tr}.

Adapting the rationale of VEM for elliptic problem to our case, we
construct the (approximated) stabilization form as follows.
Let $\psi_r(\bx)=\varphi_j(\bx)\,\pw_{j\ell}(\bx)$ and
$\psi_s(\bx)=\varphi_\kappa(\bx)\,\pw_{\kappa m}(\bx)$  be two basis
functions in $V_p(K)$. Then
\[
\begin{split}
\nabla\psi_r&=(\nabla\varphi_j+\varphi_j\, ik\bd_\ell)\,
\pw_{j\ell},\\
\nabla\psi_s&=(\nabla\varphi_\kappa+\varphi_\kappa\, ik\bd_m)\,
\pw_{\kappa m},
\end{split}
\]
and thus
\[
\begin{split}
\nabla\psi_s\cdot\conj{\nabla\psi}_r
&=(\nabla\varphi_\kappa\cdot \nabla\varphi_j+
ik\,\varphi_\kappa\,\bd_m\cdot \nabla\varphi_j -
ik\,\varphi_j\, \bd_\ell\cdot\nabla\varphi_\kappa -
k^2 \varphi_\kappa\,\varphi_j\,\bd_m\cdot\bd_\ell)
\,\pw_{\kappa m}\,\conj{\pw}_{j \ell}.
\end{split}
\]
Taking into account the scaling of the terms in the brackets with
respect to the elemental mesh size, we neglect the last three and
replace the first by $\delta_{\kappa j}/h_K^2$.
Therefore, we define $s^K((I-\Pi)u,(I-\Pi)v)$ in terms of its
associated matrix $S_K$ as
\begin{equation}\label{eq:our_SK}
S_K=\conj{(I-P)}^T M (I-P),
\end{equation}
where $M$ is the (scaled) plane wave mass matrix 
of size $(n_K p,n_K p)$
whose entries are 
$M(r,s)=\int_K \frac{\delta_{\kappa j}}{h_K^2}\,\pw_{\kappa
  m}\,\conj{\pw}_{j\ell}\,dV$.
More precisely, if $r=(j-1)p+\ell$ and $s=(\kappa-1)p+m$,
\begin{equation}\label{eq:our_M}
\begin{split}
M(r,s)&=\frac{\delta_{\kappa
    j}}{h_K^2}\,e^{ik\bd_m\cdot(\bx_K-\bx_j)}\,e^{-ik\bd_\ell\cdot(\bx_K-\bx_\kappa)}
\int_K pw_m\,\conj{pw}_\ell\,dV\\
&=\frac{\delta_{\kappa
    j}}{h_K^2}\,e^{ik\bd_m\cdot(\bx_K-\bx_j)}\,e^{-ik\bd_\ell\cdot(\bx_K-\bx_\kappa)}
\int_K e^{ik(\bd_m-\bd_\ell)\cdot(\bx-\bx_K)}\,dV.
\end{split}
\end{equation}
We point out that the integral defining $M(r,s)$  can be computed exactly;
  see Remark~\ref{rem:exactintegrals}.

On each element $K$, the 
elemental matrix of the complete PW-VEM is therefore
\begin{equation}\label{eq:matrixform}
\conj{B}^T\conj{G}^{-1} B + \conj{(I-P)}^T M (I-P) +R
\end{equation}
where $D$, $B$, $G$ and $P$ are defined in Section~\ref{sec:RN}, $M$
is the scaled plane wave mass matrix
defined in \eqref{eq:our_M},
and $R$ is the matrix associated with the bilinear form
$ik\int_{\partial K\cap\partial\Omega} u\conj{v}\,dS$:
\[
R(r,s)=ik\int_{\partial K\cap\partial\Omega} \psi_s\conj{\psi}_r\,dS,
\qquad r,s=1,\ldots,n_Kp.
\]

\section{Numerical results}\label{sec:num}
%
Unless otherwise stated, in the following experiments
we consider the problem~\eqref{eq:helm}
in the domain $\Omega=(0,1)^2$, and with boundary
datum $g$ such that the analytical
solution is 
\[
u(\bx)=H_0^{(1)}(k\abs{\bx-\bx_0}),\qquad \bx_0=(-0.25,0),
\]
where $H_0^{(1)}$ is the zero-th order Hankel function of the first
kind. In Figure~\ref{fig:Hankel}, we report the real part of this solution
$u$ for the wave numbers $k=20$, $k=40$ and $k=60$.
All the errors reported in the following are relative errors in the $L^2$-norm.

\begin{figure}[!htbp]
  \centering
\hspace{-2truecm}
  \begin{minipage}[c]{0.33\textwidth}
    \qquad\qquad\includegraphics[width=0.90\textwidth]{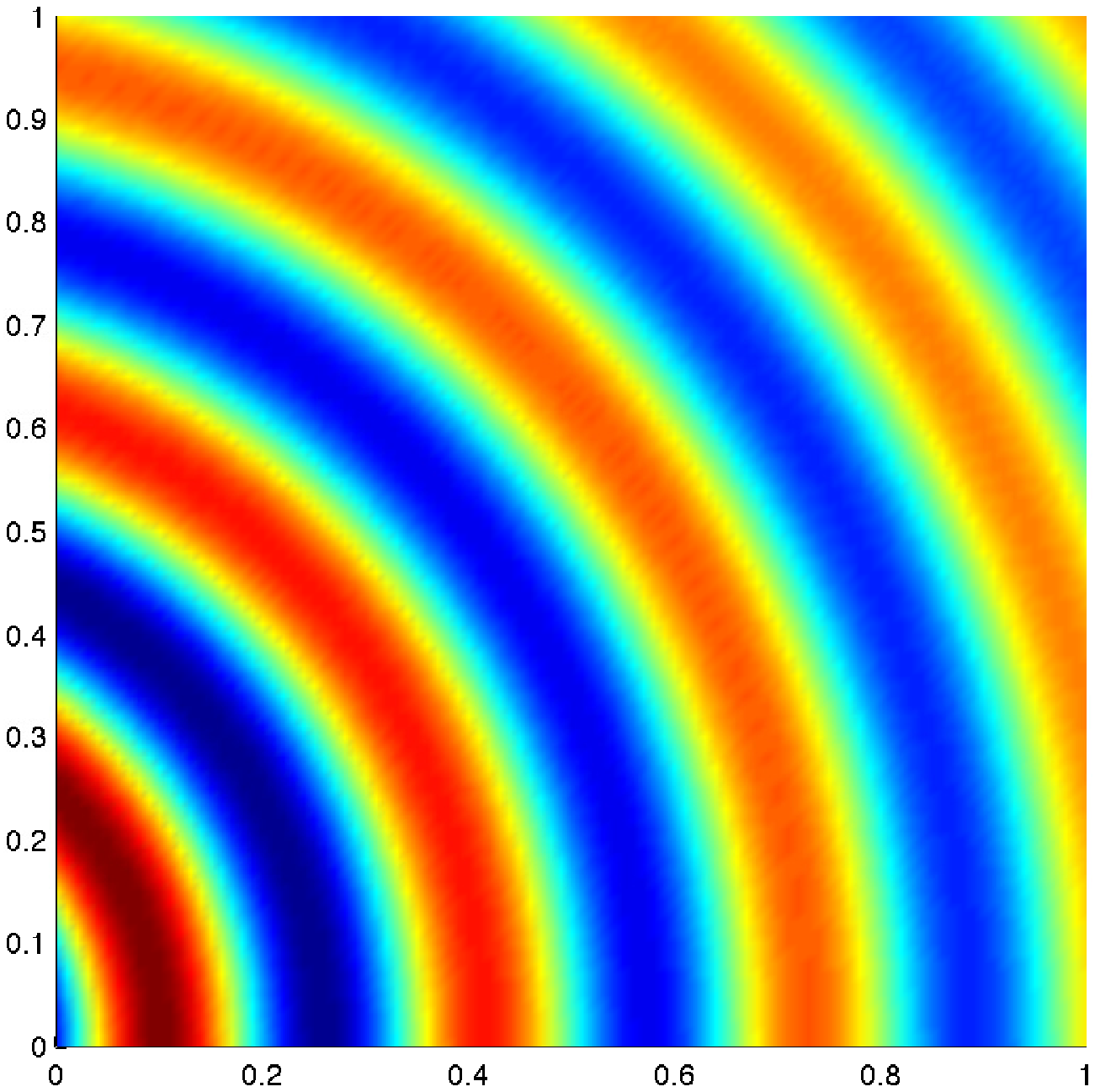}
  \end{minipage}%
  \begin{minipage}[c]{0.33\textwidth}
\qquad\qquad\includegraphics[width=0.90\textwidth]{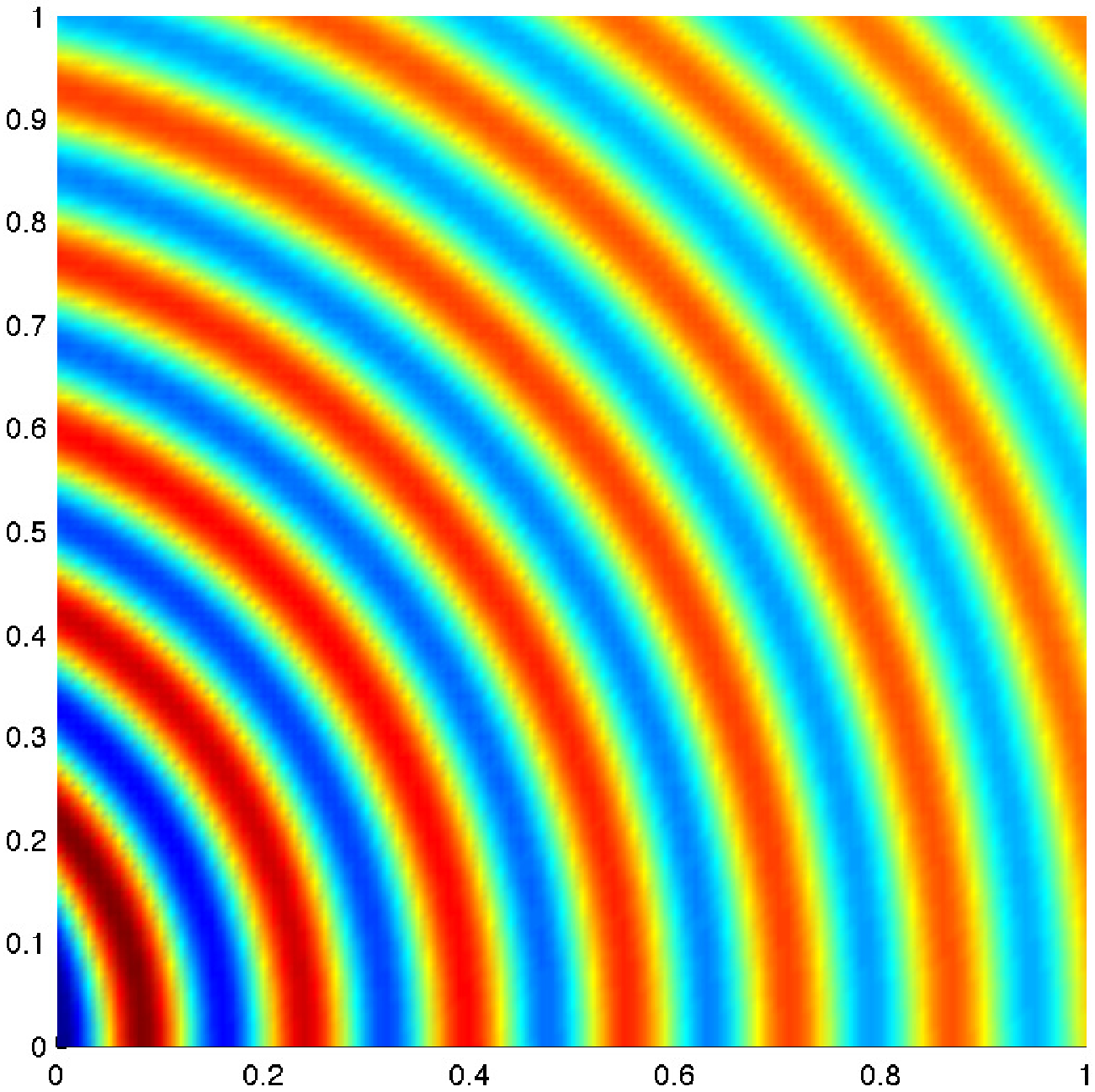}
  \end{minipage}%
  \begin{minipage}[c]{0.33\textwidth}
\qquad\qquad\includegraphics[width=0.90\textwidth]{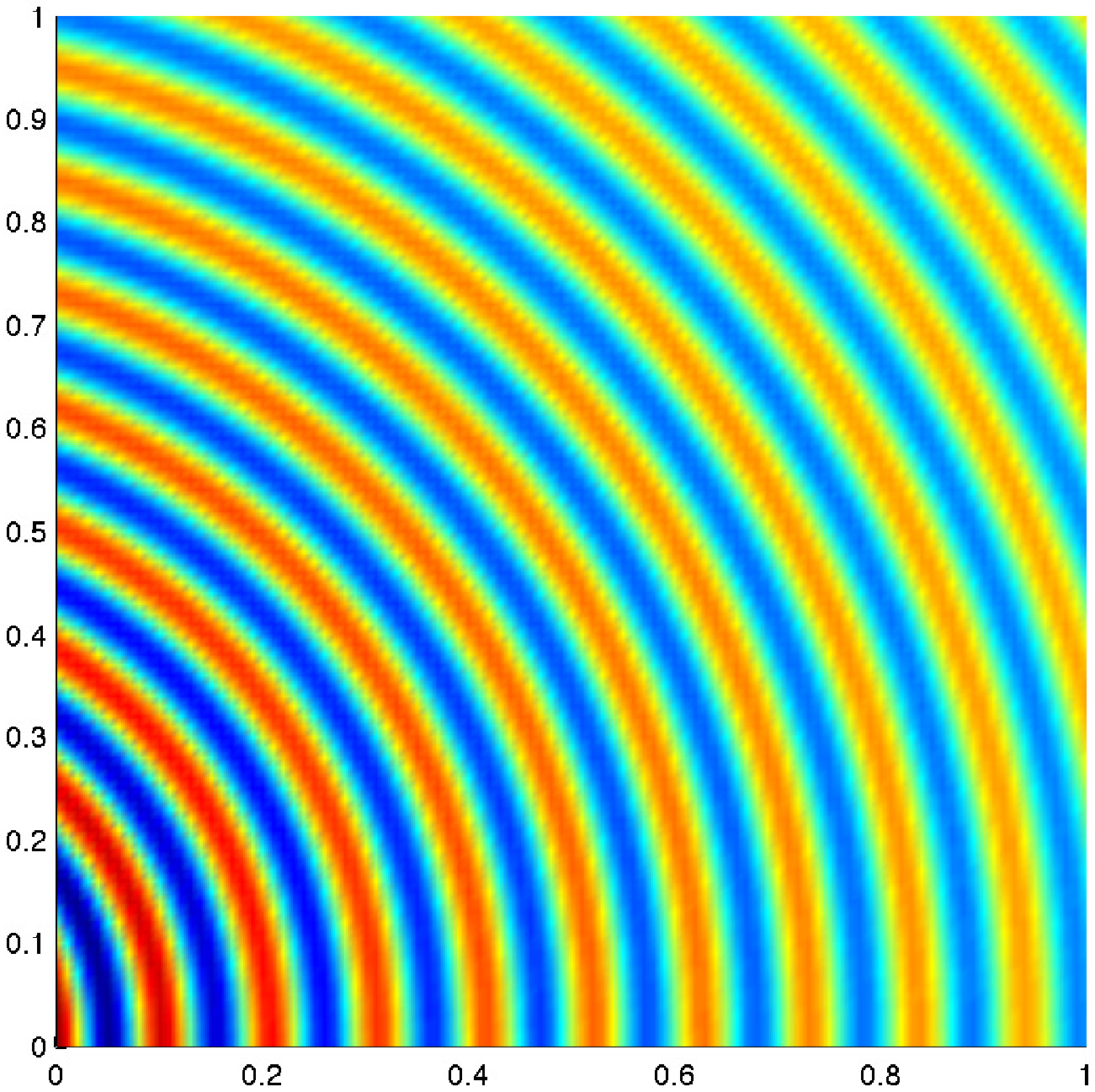}
  \end{minipage}
\caption{Real part of the function
  $u(\bx)=H_0^{(1)}(k\abs{\bx-\bx_0})$, $\bx_0=(-0.25,0)$ in the
  domain $\Omega=(0,1)^2$, for $k=20$ (left), $k=40$ (center), and $k=60$ (right).}
  \label{fig:Hankel}
\end{figure}

The function $u(\bx)$ is a regular solution of the
homogeneous Helmholtz problem, so that the $h$-convergence 
result 
of Corollary \ref{cor:rates} provides 
order $h^m$ (with $m=(p-1)/2$) for the error in the $\Nh{.}$ norm. 
An $L^2$-error bound 
is then given by \eqref{eq:L2-error}. Because of the
restriction \eqref{eq:threshold} on the mesh size, we have 
$hk^2<C$
and 
the factor $hk$ 
in \eqref{eq:L2-error} implies the gain of a factor 
$h^{1/2}$ 
for the $L^2$-error, with respect to the $\Nh{.}$-error.

Due to the fact that high-order approximation properties of
  the plane wave spaces hold true only for solutions to the
  homogeneous Helmholtz equation, and our duality argument in the proof
  of Theorem~\ref{th:abstract} makes use of a dual problem with non
  zero right-hand side (see equation~\eqref{eq:dual}), our analysis
  does not cover the asymptotics in $p$. On the other hand,
the $p$-version of the best approximation results 
(see~\cite[Th.~3.9, Rem.~3.13]{PVersion} and
  \cite[Sect.~5.2]{ExpConv} for the best approximation estimates in
$V_p^{\ast}(\calT_h)$; for $V_p(\calT_h)$, use again
\cite[Th. 2.1]{BAM96}) give algebraic convergence
  $(\log(p)/p)^\ell$, provided that the exact solution belongs to
  $H^{\ell+1}(\Omega)$ and $p$ is large enough, or exponential convergence,
whenever the exact solution can be extended analytically outside $\Omega$.
Therefore, we also present numerical results
for fixed $h$ and increasing $p$. 


First, we numerically evaluate in Section~\ref{sec:numerics_PUMGRAD}
the loss of accuracy due to the approximation of the stabilization
term. Then, in Section~\ref{sec:numerics_conv}, we test the $h$- and $p$-versions of the PW-VEM on
polygonal meshes, and compare the results obtained for different
values of the wave number $k$.
We conclude by testing the $p$-convergence for non analytic
  exact solutions (Section~\ref{sec:nonsmooth}).

\subsection{Effects of the approximation of the stabilization
  term}\label{sec:numerics_PUMGRAD}

We recall that the choice $s^K((I-\Pi)u,(I-\Pi)v)=a^K((I-\Pi)u,(I-\Pi)v)$ in the
formulation~\eqref{def:ah} coincides with the partition of unity (PUM)
method, since the complete bilinear form is considered in this case.
On triangular meshes, where the canonical VEM basis functions
$\varphi_j$ coincide with the classical Lagrangian $\IP_1$ basis
functions, we compare the error of the PUM, with that of the VEM
with the stabilization~\eqref{eq:GRAD_Tr} (GRAD in the
following) and that of the PW-VEM, whose elemental matrices
are given by~\eqref{eq:matrixform}.

We have used structured triangular meshes, obtained from Cartesian
subdivisions of $\Omega$, dividing then each square into two triangles
by one of the diagonals. 
We report in Table~\ref{tab:COMPhversion} the results obtained on four
meshes containing, respectively, 8, 32, 128 and 512 triangles, using
$p=13$ plane waves per node, for the wave number $k=20$.
When the mesh size $h$ is sufficiently small, GRAD is very close to PUM, showing that 
the sufficient requirement \eqref{eq:stabil_hyp} on the stabilization term is 
indeed sharp. 
When the approximation of the stabilization term defined in~\eqref{eq:our_SK}--\eqref{eq:our_M}
is used instead (PW-VEM),
some accuracy is lost. However, the same order of
convergence as for PUM is maintained.

\begin{table}[h]
\begin{center}
\begin{tabular}{|c| |c|c| |c|c| |c|c|}
   \hline
&\multicolumn{2}{c||}{PUM}&\multicolumn{2}{c||}{GRAD}&\multicolumn{2}{c|}{PW-VEM}\\
{$h$}&$L^2$-error & rate &$L^2$-error & rate&$L^2$-error & rate\\
\hline
7.0711$e-$01&1.9213$e-$02&-         &7.1989$e-$01&-         &4.1548$e-$01&-\\
3.5355$e-$01&4.0683$e-$04&5.5615&1.5517$e-$03&8.8577&1.0990$e-$02&5.2406 \\
1.7678$e-$01&3.4126$e-$06&6.8974&3.3981$e-$06&8.8349&1.2969$e-$04&6.4050\\
8.8388$e-$02&4.0164$e-$08&6.4088&4.0148$e-$08&6.4033&1.1089$e-$06& 6.8698\\
\hline
\end{tabular}
\end{center}
\caption{Relative error in the $L^2$-norm for PUM, GRAD, PW-VEM for structured triangular
  meshes; $k=20$, $p=13$.}
\label{tab:COMPhversion} 
\end{table}


\begin{figure}[!htbp]
  \centering
  \begin{minipage}[c]{0.50\textwidth}
\psfrag{p}[c]{\scriptsize $p$}
\psfrag{error}[c]{\scriptsize $\log(L^2\text{-error})$}
    \qquad\qquad\includegraphics[width=0.75\textwidth]{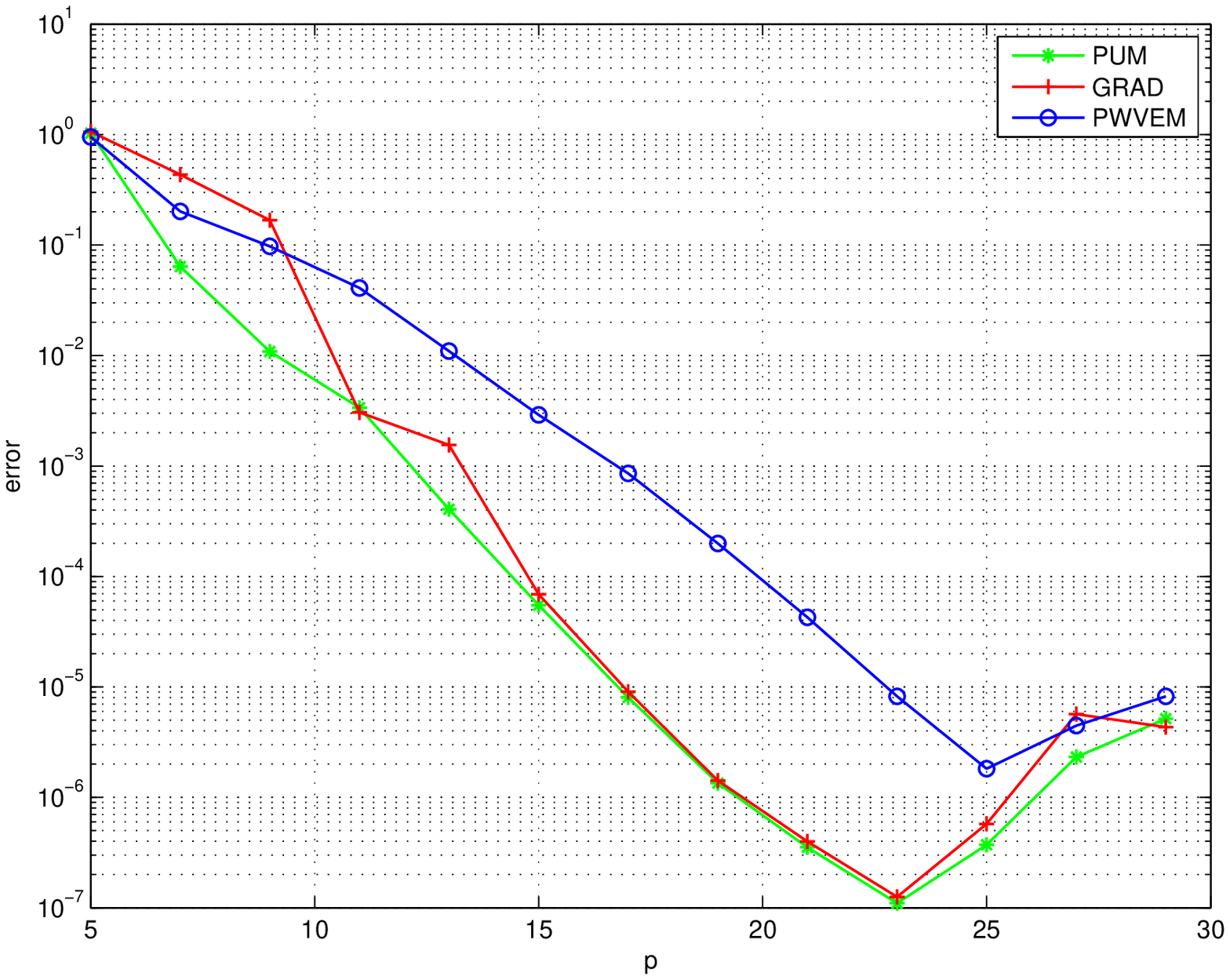}
  \end{minipage}%
  \begin{minipage}[c]{0.50\textwidth}
\psfrag{p}[c]{\scriptsize $p$}
\psfrag{error}[c]{\scriptsize $\log(L^2\text{-error})$}
    \qquad\includegraphics[width=0.75\textwidth]{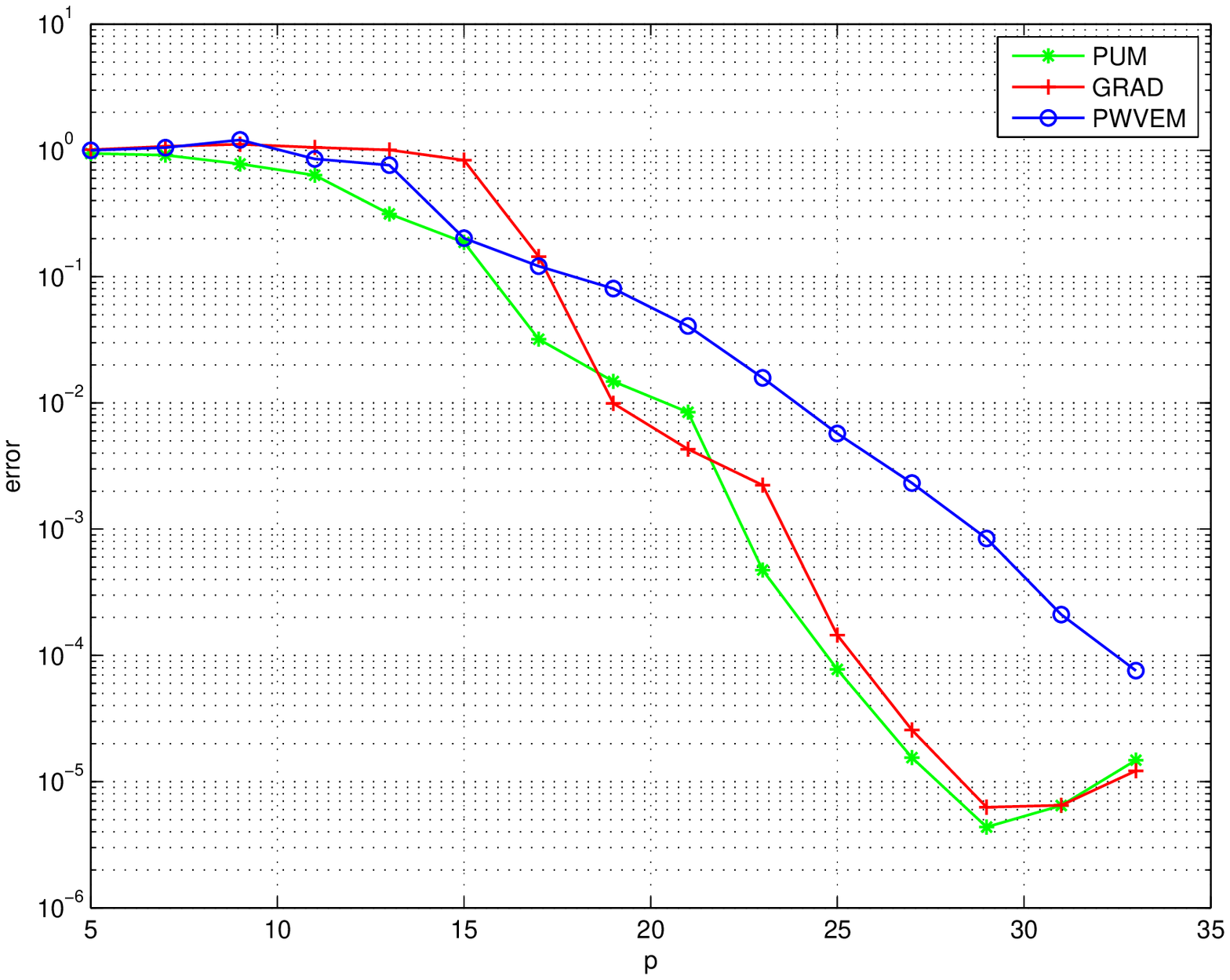}
  \end{minipage}
\caption{Relative error in the $L^2$-norm for PUM, GRAD, PW-VEM for the structured triangular
  mesh with 32 elements ($h=$3.5355$e-$01), for different values
  of $p$, for $k=20$ (left) and $k=40$ (right).}
\label{fig:COMPpversion} 
\end{figure}

We also report in Figure~\ref{fig:COMPpversion}
the errors with the three methods on the mesh with 32 elements,
varying $p$, for $k=20$ and $k=40$,
respectively. 
As in the $h$-error study, when the best
  approximation error is sufficiently small (here, the number of plane
  waves per node is sufficiently large), GRAD coincides with PUM, and
  PW-VEM looses some accuracy, but still showing an exponential convergence
  behavior, with a comparable order of convergence. Notice that, when
  increasing the wave number $k$, all these methods exhibit a larger
  preasymptotic region with slower convergence. Moreover, in both
  figures, and in many of the next plots, 
for large values of $p$, instability takes place due to the
  ill-conditioning of the plane wave basis, and the impact of roundoff
  error results in the increasing of the error;
we refer to~\cite{PVersion} for a similar behavior of the plane wave
discontinuous Galerkin (PWDG) method. We remark that a similar
phenomenon occurs for small $h$ (see Section~\ref{sec:numerics_conv} below).

We point out that the tests presented above do not provide 
a complete comparison
  between the PW-VEM and PUM (and/or GRAD), since we restricted to
  triangular meshes, where PUM can be exactly computed. A fair
  comparison should be carried out on more general meshes, where the use of PUM with generalized barycentric
  coordinates would need some quadrature formulas (we refer 
to~\cite{MRS2014}, where a PUM with generalized barycentric
  coordinates  is compared to VEM with different stabilizations 
for the Poisson
  problem). 
On the other hand, these tests on triangular meshes suggest that there is a margin of improvement for the choice of the approximation
of the stabilization term defined
in~\eqref{eq:our_SK}--\eqref{eq:our_M}.

\subsection{Convergence}\label{sec:numerics_conv}

We test convergence of the PW-VEM on (polygonal)
Voronoi meshes made of $2^n$ elements, $3\le n\le 9$.
We report in Figure 3 the meshes with 16 and 64 elements.


\begin{figure}[!htbp]
  \centering
  \begin{minipage}[c]{0.50\textwidth}
   \qquad\qquad\includegraphics[width=0.78\textwidth]{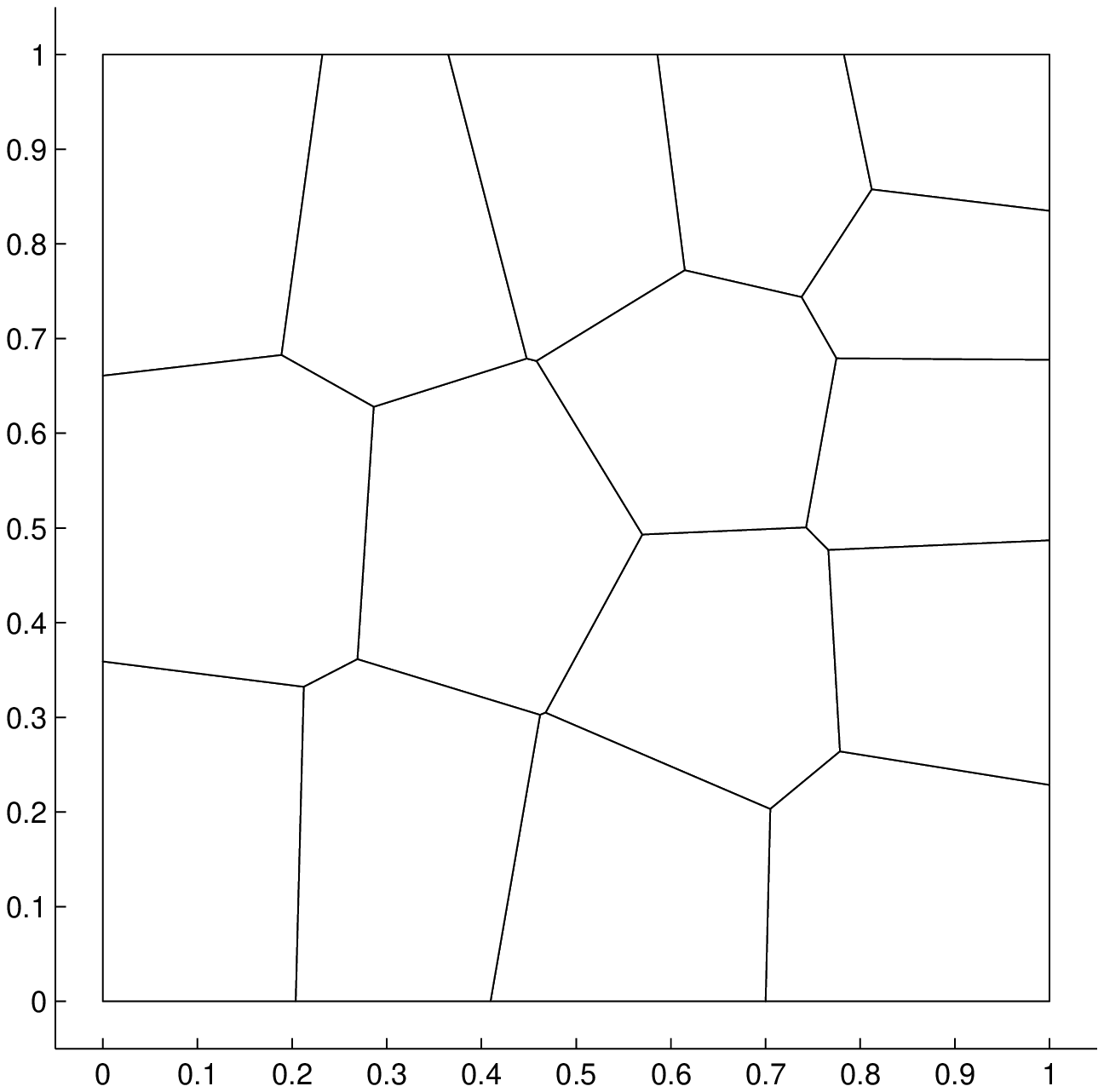}
  \end{minipage}%
  \begin{minipage}[c]{0.50\textwidth}
    \includegraphics[width=0.78\textwidth]{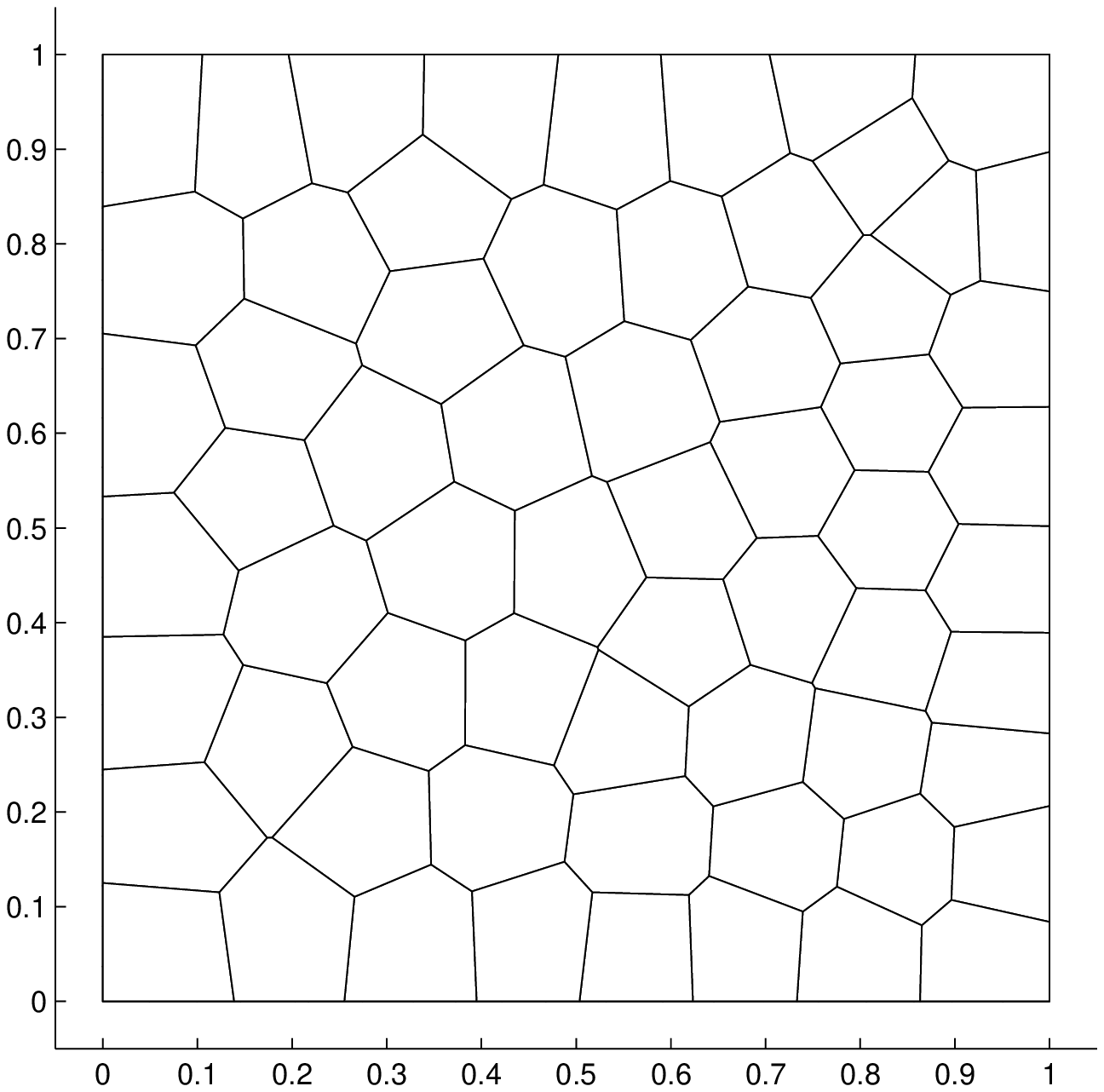}
  \end{minipage}
\caption{Voronoi meshes with 16 (left) and 64 (right) elements.}
  \label{fig:Voronoi}
\end{figure}

\subsubsection{\bf $h$-convergence}\label{sec:hconv}

We test first the $h$-convergence. 
We report in Figure~\eqref{fig:hconvk} the relative errors in the
$L^2$-norm of the PW-VEM method on the Voronoi meshes,
for $p=13$ and $k=20$, $40$, $60$. 
As pointed out at the beginning of Section~\ref{sec:num}, the
  expected convergence rate is 
$m+1/2$, i.e., $6.5$ for $p=13$. This rate seems to be actually reached,
after a pre-asymptotic region, which is wider for larger
$k$. Also the results of Table~\ref{tab:hconvergence} seem to confirm
these rates ($6.5$ for $p=13$ and $7.5$ for $p=15$).
However, since the sequence of Voronoi meshes considered here is not made by nested meshes, 
the maximum  of the diameters of the elements $K \in \calT_h$
(definition of $h$ for  Table~\ref{tab:hconvergence}) does not vary by
a factor from one mesh to the other, so that a precise assessment of
the convergence rate is difficult. Nevertheless, we can conclude that
high order rate is provided also on polygonal meshes.
We also report in Figure~\ref{fig:hconvp} the error plots for
  $k=20$ and for different values of $p$. For fine meshes and high
  $p$, instability takes place and the error increases, as already observed.

\begin{figure}[!htbp]
  \centering
\psfrag{invh}[c]{\scriptsize $1/h$}
\psfrag{error}[c]{\scriptsize $\log(L^2\text{-error})$}
    \includegraphics[width=0.50\textwidth]{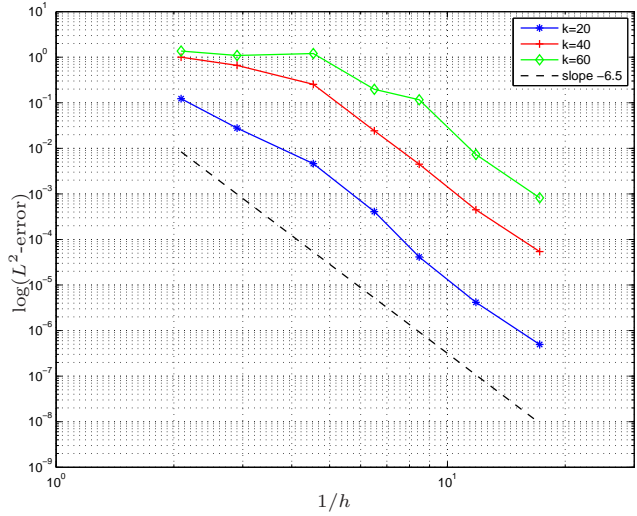}
\caption{Relative error in the $L^2$-norm of the PW-VEM 
for $p=13$ and $k=20$, $40$, $60$ on the Voronoi meshes.}
  \label{fig:hconvk}
\end{figure}

\begin{table}[h]
\begin{center}
\begin{tabular}{|c| |c|c| |c|c|}
   \hline
&\multicolumn{2}{c||}{$p=13$}&\multicolumn{2}{|c|}{$p=15$}\\
{$h$}&$L^2$-error & rate &$L^2$-error & rate\\
\hline
3.4487$e-$01&2.7882$e-$02& -
&1.1374$e-$02&-
\\
2.2017$e-$01&4.6014$e-$03&4.0147   &1.5253$e-$03&4.4772\\
1.5376$e-$01&4.0962$e-$04&6.7374   &6.6821$e-$05&8.7123\\
1.1806$e-$01&4.1264$e-$05&8.6874   &5.3076$e-$06&9.5868\\
8.4571$e-$02&4.1597$e-$06&6.8785   &9.3361$e-$07&5.2096\\
\hline
\end{tabular}
\end{center}
\caption{Relative error in the $L^2$-norm for PW-VEM for $k=20$, with $p=13$ and $p=15$, on Voronoi meshes.}
\label{tab:hconvergence} 
\end{table}

\begin{figure}[!htbp]
  \centering
\psfrag{ndof}[c]{\scriptsize $\#\, \text{d.o.f.}$}
\psfrag{error}[c]{\scriptsize $\log(L^2\text{-error})$}
    \includegraphics[width=0.50\textwidth]{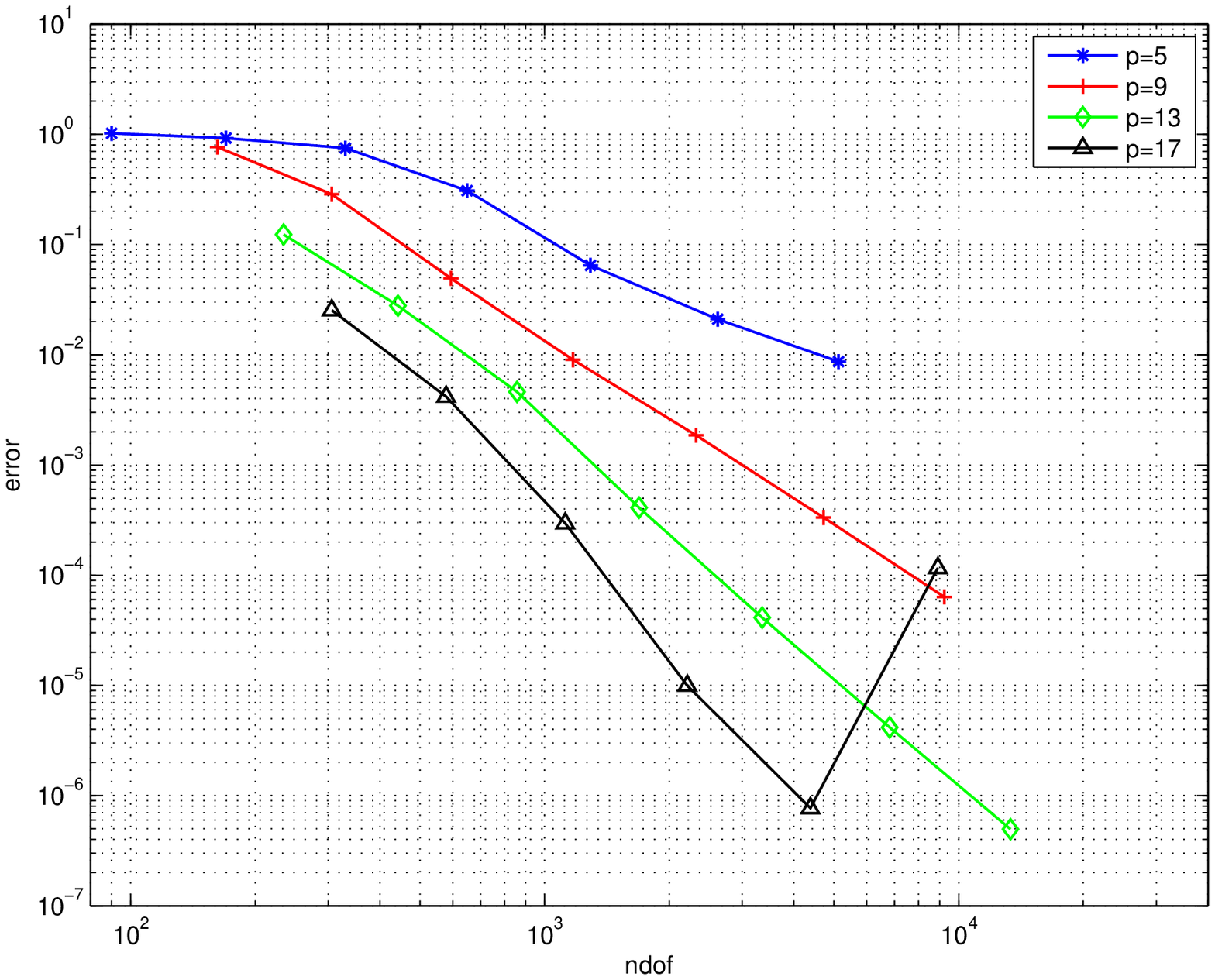}
\caption{Relative error in the $L^2$-norm of the PW-VEM 
for $k=20$ and different values of $p$ on the Voronoi meshes.}
  \label{fig:hconvp}
\end{figure}

In order to test the pollution effect, we have run the
PW-VEM, 
with $p=9$, on the Voronoi meshes made of $2^n$ elements, $3\le n\le 9$,
and $k$ chosen such that the product $hk$ is the constant 3
(i.e., $k=$ 6.26, 8.70, 13.63, 19.51, 25.41, 35.47, 51.59,
respectively).
The results reported in Figure~\ref{fig:pollution} confirm that the
$h$-version of the PW-VEM is affected by pollution, i.e., 
the boundedness of
the product $hk$ 
is not sufficient to guarantee convergence of the discretization
error. 


\begin{figure}[!htbp]
  \centering
\psfrag{invh}[c]{\scriptsize $1/h$}
\psfrag{error}[c]{\scriptsize $\log(L^2\text{-error})$}
    \includegraphics[width=0.50\textwidth]{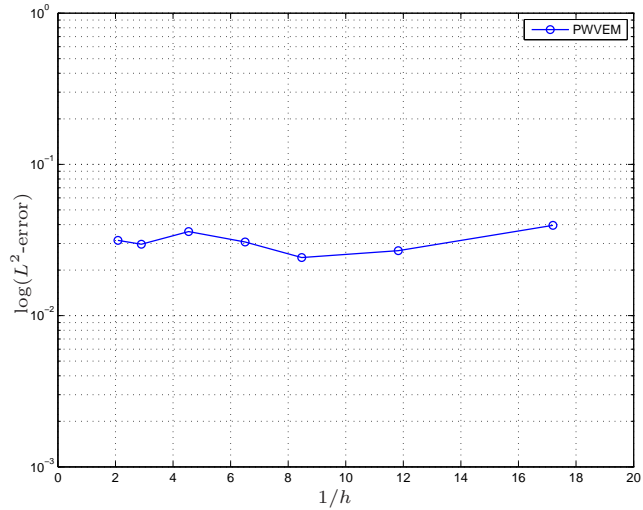}
\caption{Relative error in the $L^2$-norm of the PW-VEM 
on the Voronoi meshes for $hk=3$ and for $p=9$.}
  \label{fig:pollution}
\end{figure}

\subsubsection{\bf $p$-convergence}\label{sec:pconv}

In Figure~\ref{fig:pconvergence}, we report the relative
  errors in the $L^2$-norm for different values of $p$ in the case
  $k=20$, on the Voronoi meshes
with 16 and 64 elements.
We see exponential convergence in $p$, before instability
  for high $p$ takes place, as in Figure~\ref{fig:COMPpversion}, left,
  for the triangular meshes. 
In the case of larger elements (Figure~\ref{fig:pconvergence}, left), the pre-asymptotic region is wider,
while for smaller elements (Figure~\ref{fig:pconvergence}, right), the
instability starts to take place for smaller values of $p$.
Notice that the considered meshes also contain edges which are small, as
compared to the mesh size. These results, together with those in Section~\ref{sec:hconv}, thus suggest that the PW-VEM
is robust in case of degenerating sides.

\begin{figure}[!htbp]
  \centering
  \begin{minipage}[c]{0.50\textwidth}
\psfrag{p}[c]{\scriptsize $p$}
\psfrag{error}[c]{\scriptsize $\log(L^2\text{-error})$}
    \qquad\qquad\includegraphics[width=0.75\textwidth]{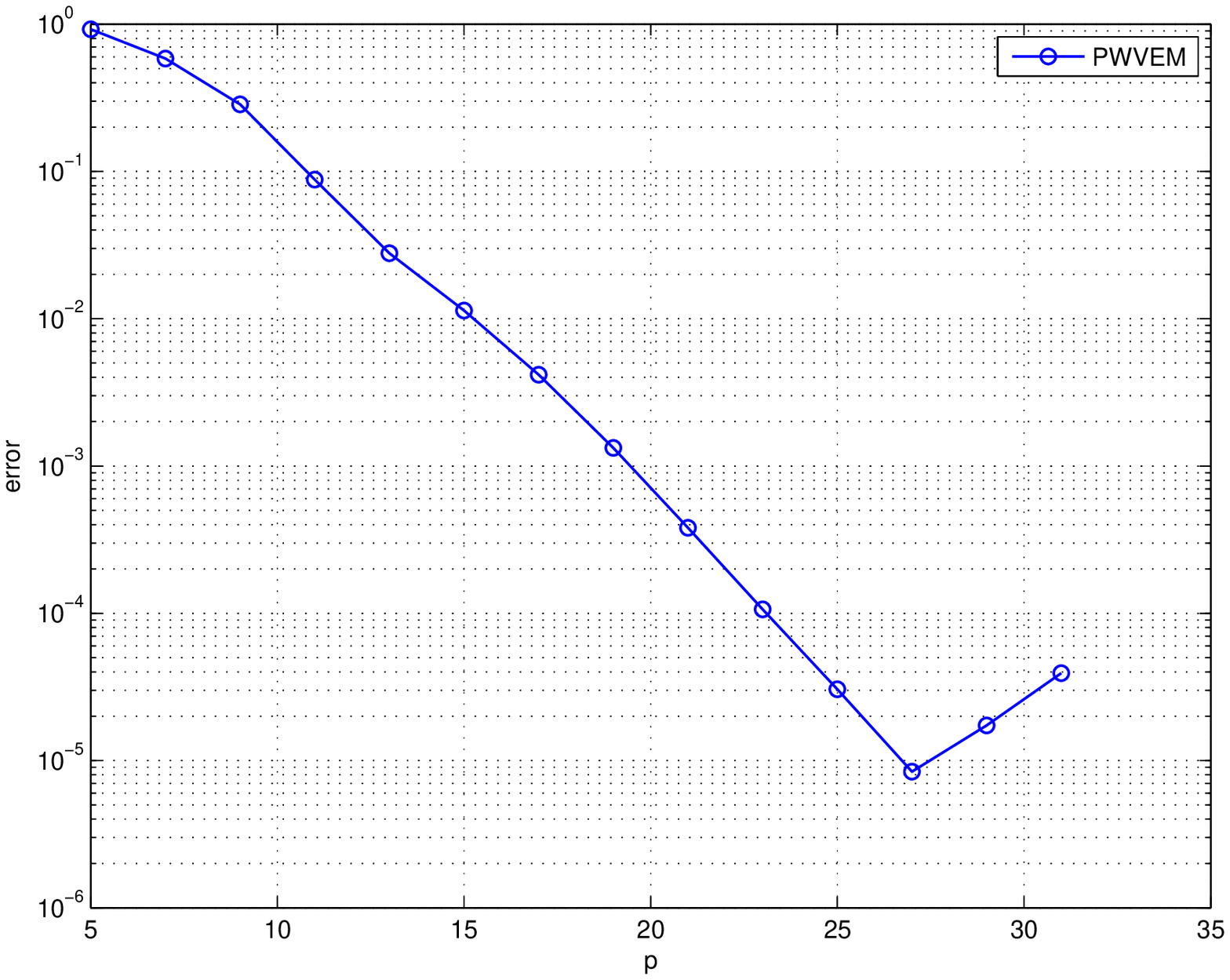}
  \end{minipage}%
  \begin{minipage}[c]{0.50\textwidth}
\psfrag{p}[c]{\scriptsize $p$}
\psfrag{error}[c]{\scriptsize $\log(L^2\text{-error})$}
    \qquad\includegraphics[width=0.75\textwidth]{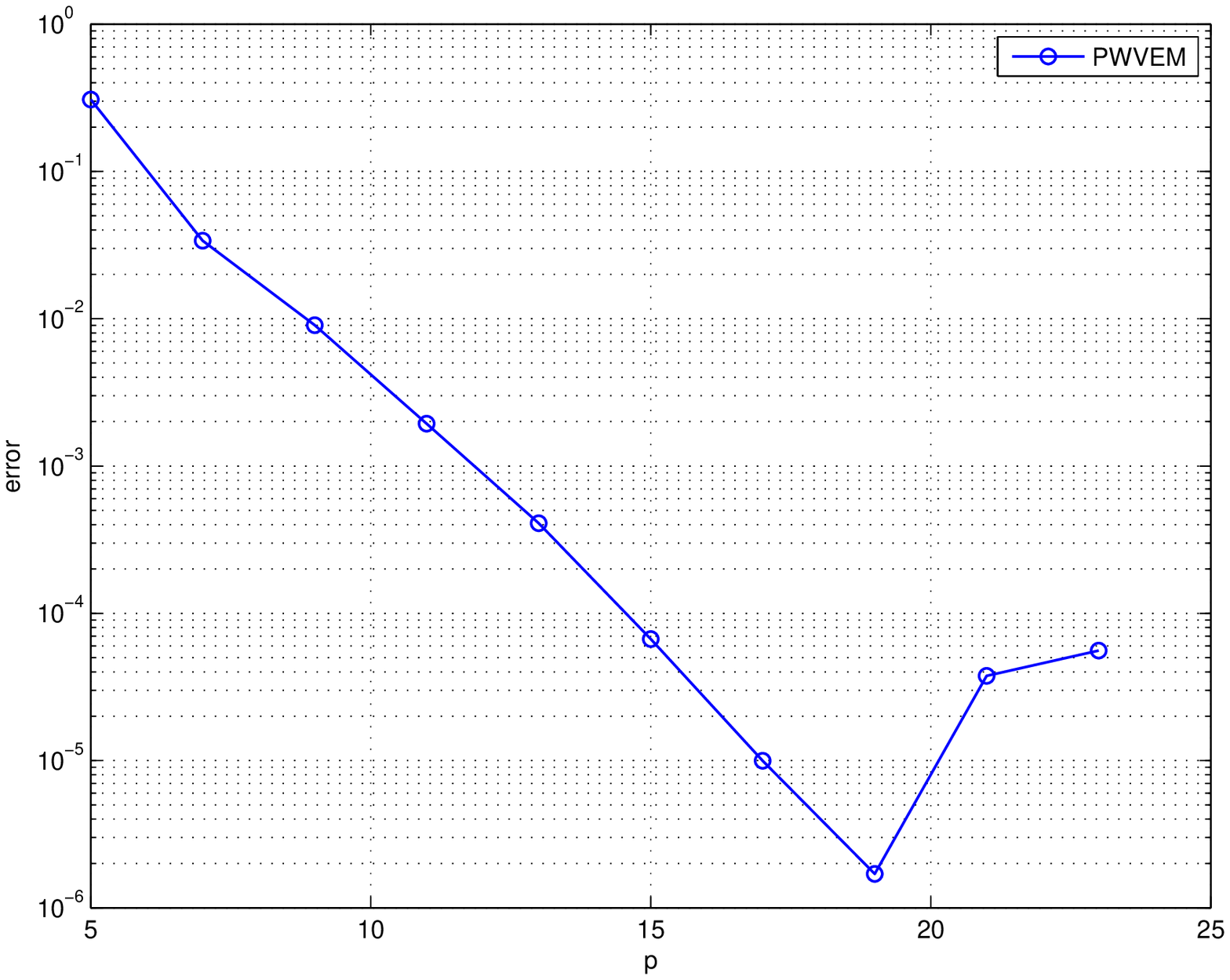}
  \end{minipage}
\caption{Relative error in the $L^2$-norm for PW-VEM for $k=20$ on
Voronoi meshes with 16 (left) and 64 (right) elements, for different
values of $p$.}
  \label{fig:pconvergence}
\end{figure}

In Figure~\ref{fig:nonconvex}, a polygonal mesh with non convex elements (100
elements) is represented, and the corresponding $L^2$-error for $k=20$
and different values of $p$ is plotted. The results are
  comparable to those obtained with meshes with convex elements,
  confirming robustness of the method also in the presence of non
  convex elements.
Due to the presence of
small elements, instability starts to take place for smaller values of
$p$, as compared to the previous, more regular, meshes.

\begin{figure}[!htbp]
  \centering
  \begin{minipage}[c]{0.50\textwidth}
    \qquad\includegraphics[width=0.78\textwidth]{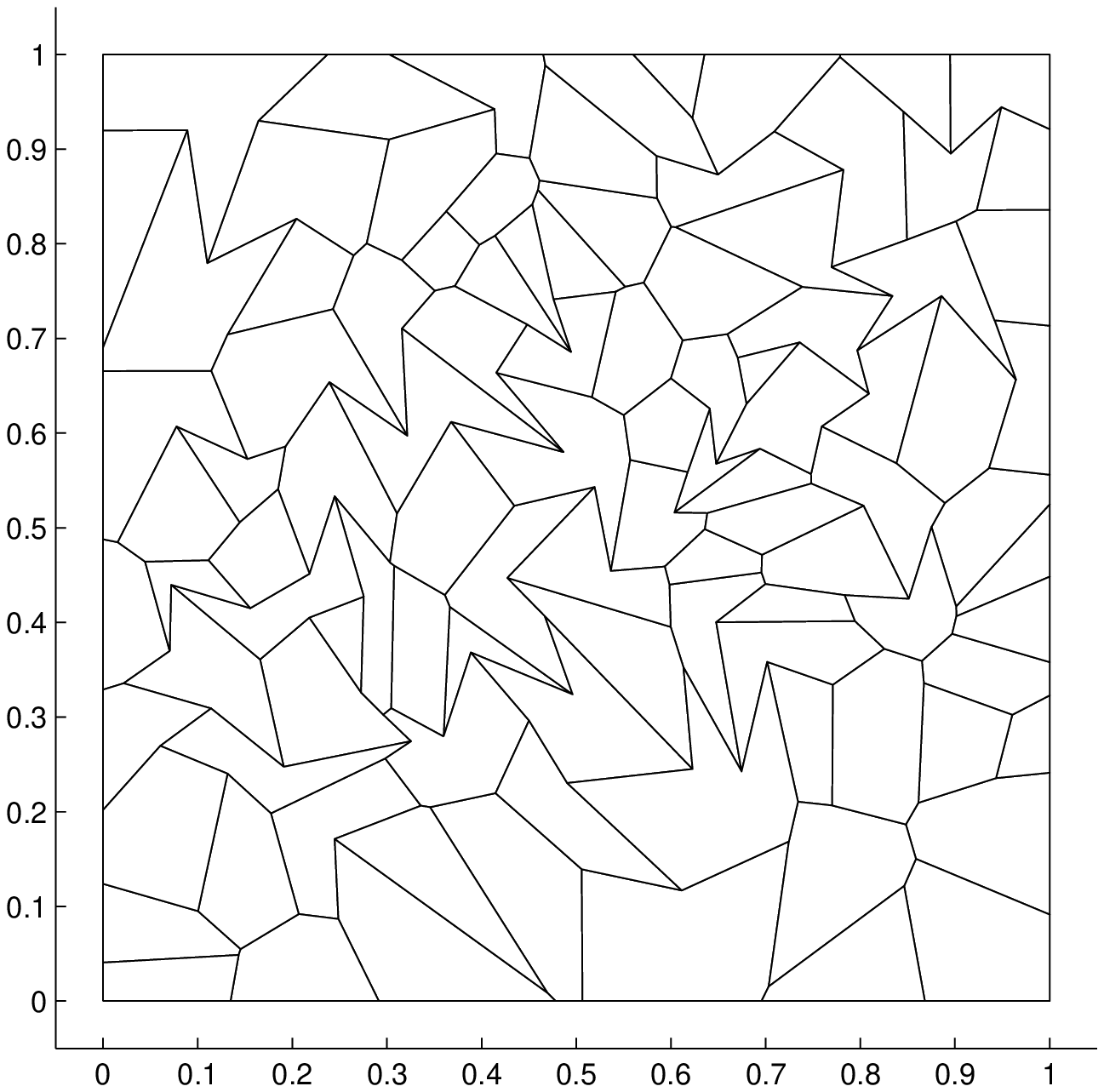}
  \end{minipage}%
  \begin{minipage}[c]{0.50\textwidth}
\vspace{0.5truecm}
\psfrag{p}[c]{\scriptsize $p$}
\psfrag{error}[c]{\scriptsize $\log(L^2\text{-error})$}
    \qquad\includegraphics[width=0.75\textwidth]{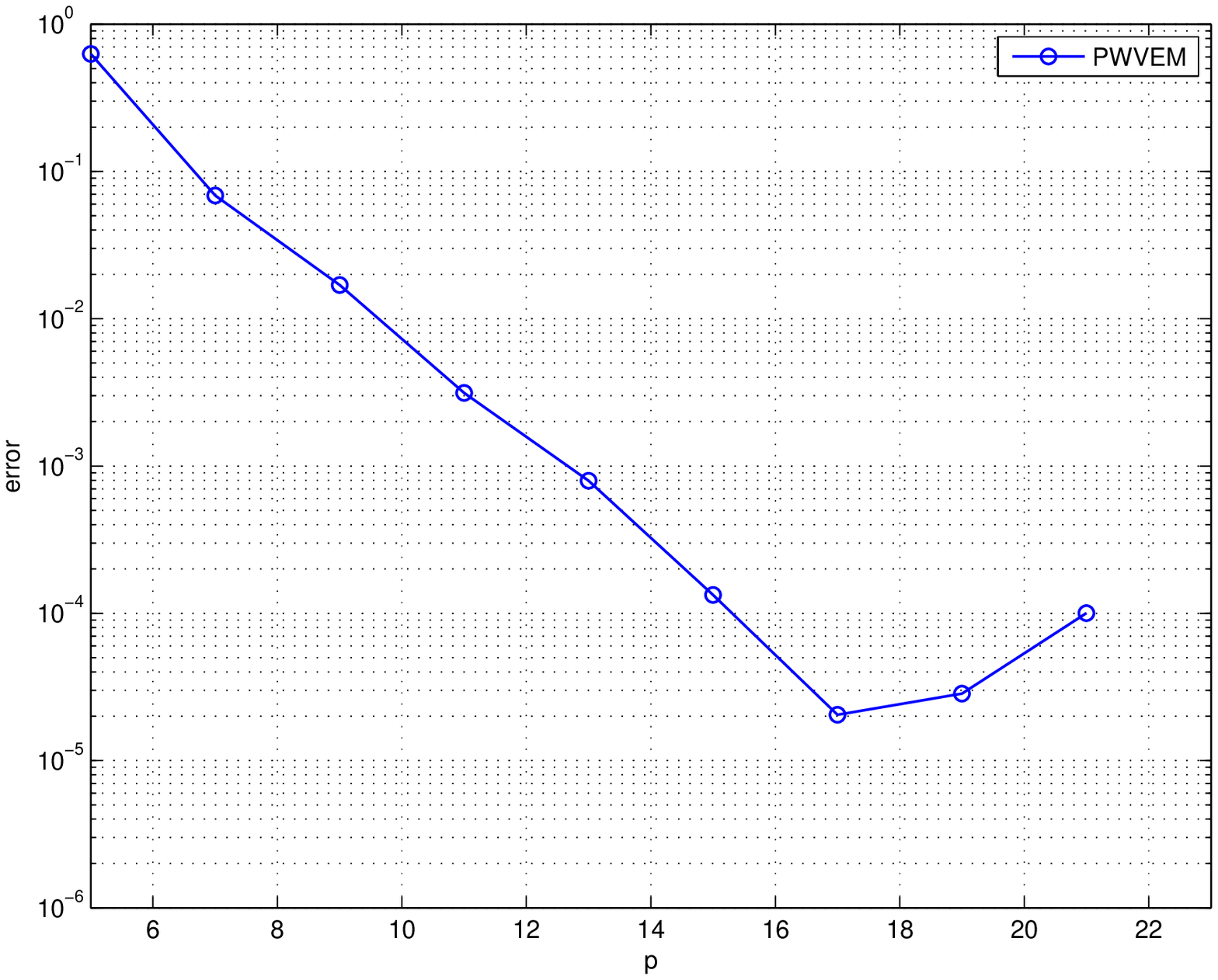}
  \end{minipage}
\caption{Polygonal mesh with 100 elements containing non convex
  elements (left), and 
relative error in the $L^2$-norm for PW-VEM for $k=20$ and different values of $p$ (right).}
  \label{fig:nonconvex}
\end{figure}

We compare now the convergence behaviour of the PW-VEM for different
values of the wavenumber $k$. We plot in Figure~\ref{fig:differentk}
the $L^2$-error of the PW-VEM on the Voronoi mesh with 64 element, for
different values of $p$ and for $k=20,\,40,\,60$. For larger values of
  $k$, the pre-asymptotic region is wider, while instability starts to
  take place for larger values of $p$.


\begin{figure}[!htbp]
  \centering
\psfrag{p}[c]{\scriptsize $p$}
\psfrag{error}[c]{\scriptsize $\log(L^2\text{-error})$}
    \includegraphics[width=0.50\textwidth]{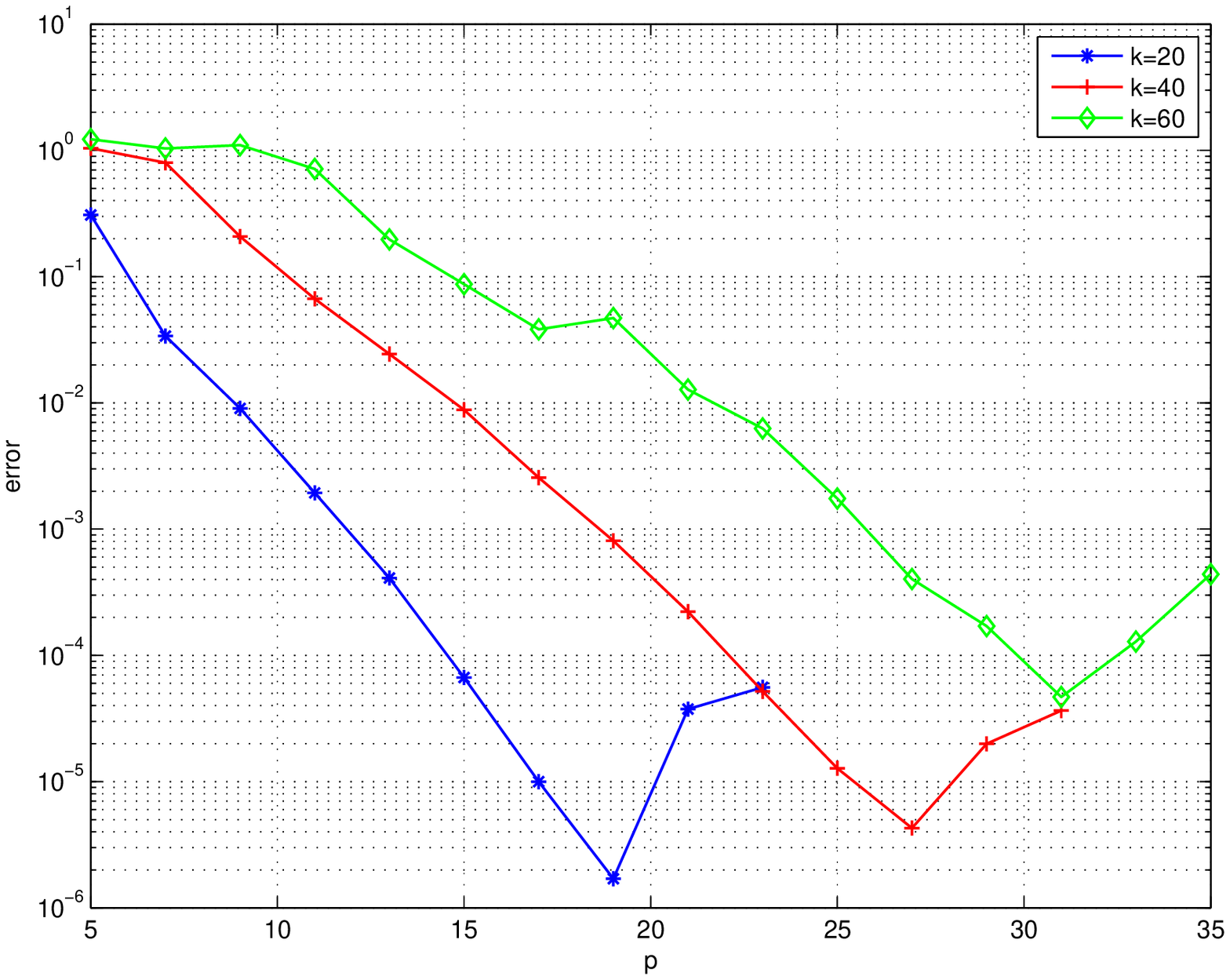}
\caption{Relative error in the $L^2$-norm of the PW-VEM on the Voronoi mesh with 64 elements, for
different values of $p$ and for $k=20,\,40,\,60$.}
  \label{fig:differentk}
\end{figure}




\subsubsection{\bf $p$-convergence for non smooth solutions}\label{sec:nonsmooth}

Finally, we test 
the $p$-convergence in the case of solutions with
  low regularity. To this aim,
as in \cite[Sect~4]{PVersion}, we consider now the problem~\eqref{eq:helm} again
in the domain $\Omega=(0,1)^2$, and with boundary
datum $g$ such that the analytical
solution is given, in polar coordinates $\bx=(r\cos\vartheta,r\sin\vartheta)$, by
\begin{equation}\label{eq:singsol}
u(\bx-\bx_0)=J_\xi(kr)\cos(\xi\vartheta), \qquad\xi\ge 0,\ \bx_0=(0,0.5),
\end{equation}
where $J_\xi$ is the Bessel function of the first
kind and order $\xi$. 
We choose $k=10$.
For integer $\xi$, $u$ can be extended
analytically outside $\Omega$, 
otherwise its derivatives have a singularity at $\bx_0$. We report in
Figure~\ref{fig:sing} the real part of $u$ for $\xi=1$ (regular
case), $\xi=3/2$ ($u\in H^2(\Omega)$) and $\xi=2/3$ ($u\not\in H^2(\Omega)$).

\begin{figure}[!htbp]
  \centering
\hspace{-2truecm}
  \begin{minipage}[c]{0.33\textwidth}
    \qquad\qquad\includegraphics[width=0.90\textwidth]{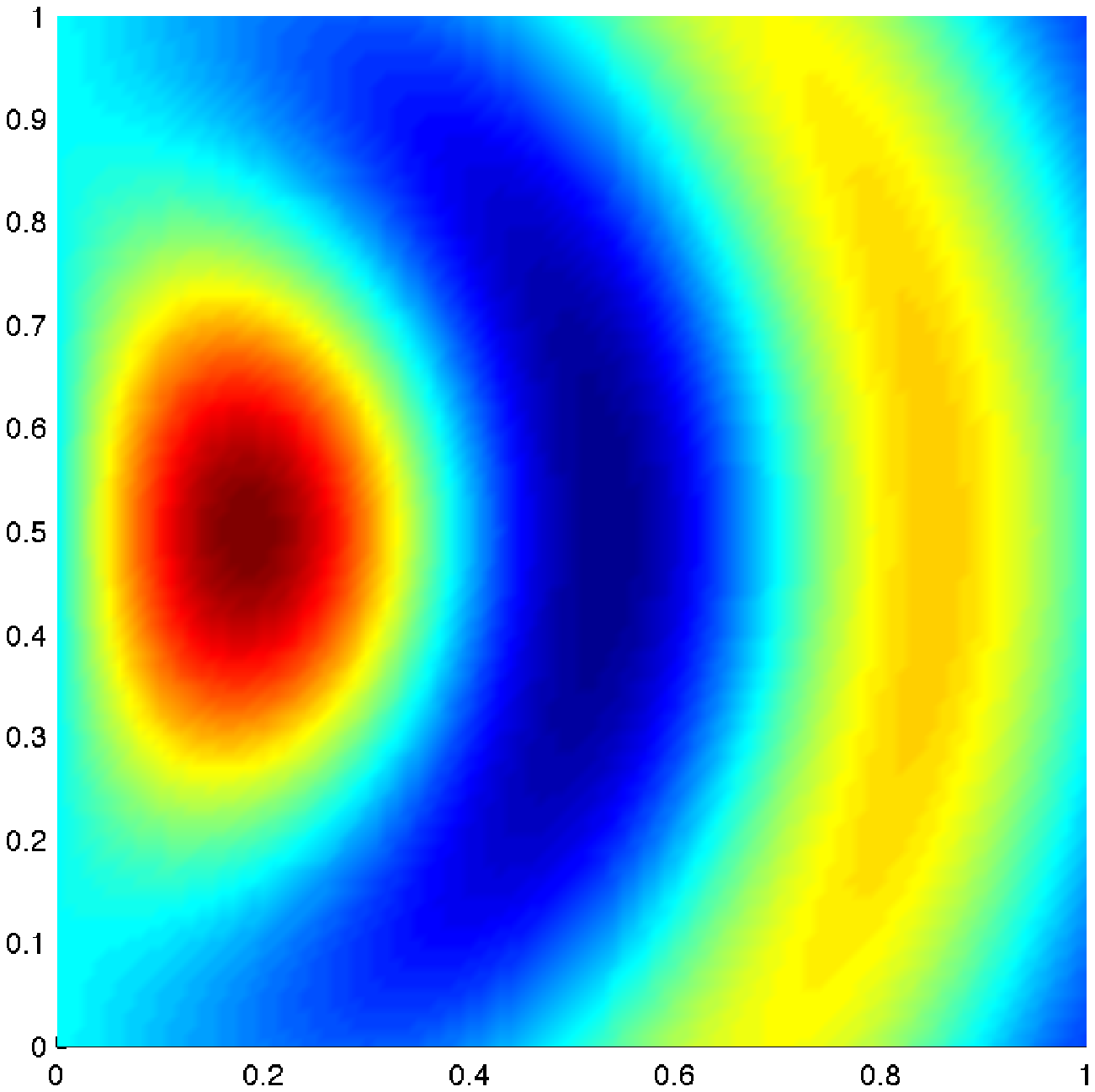}
  \end{minipage}%
  \begin{minipage}[c]{0.33\textwidth}
\qquad\qquad\includegraphics[width=0.90\textwidth]{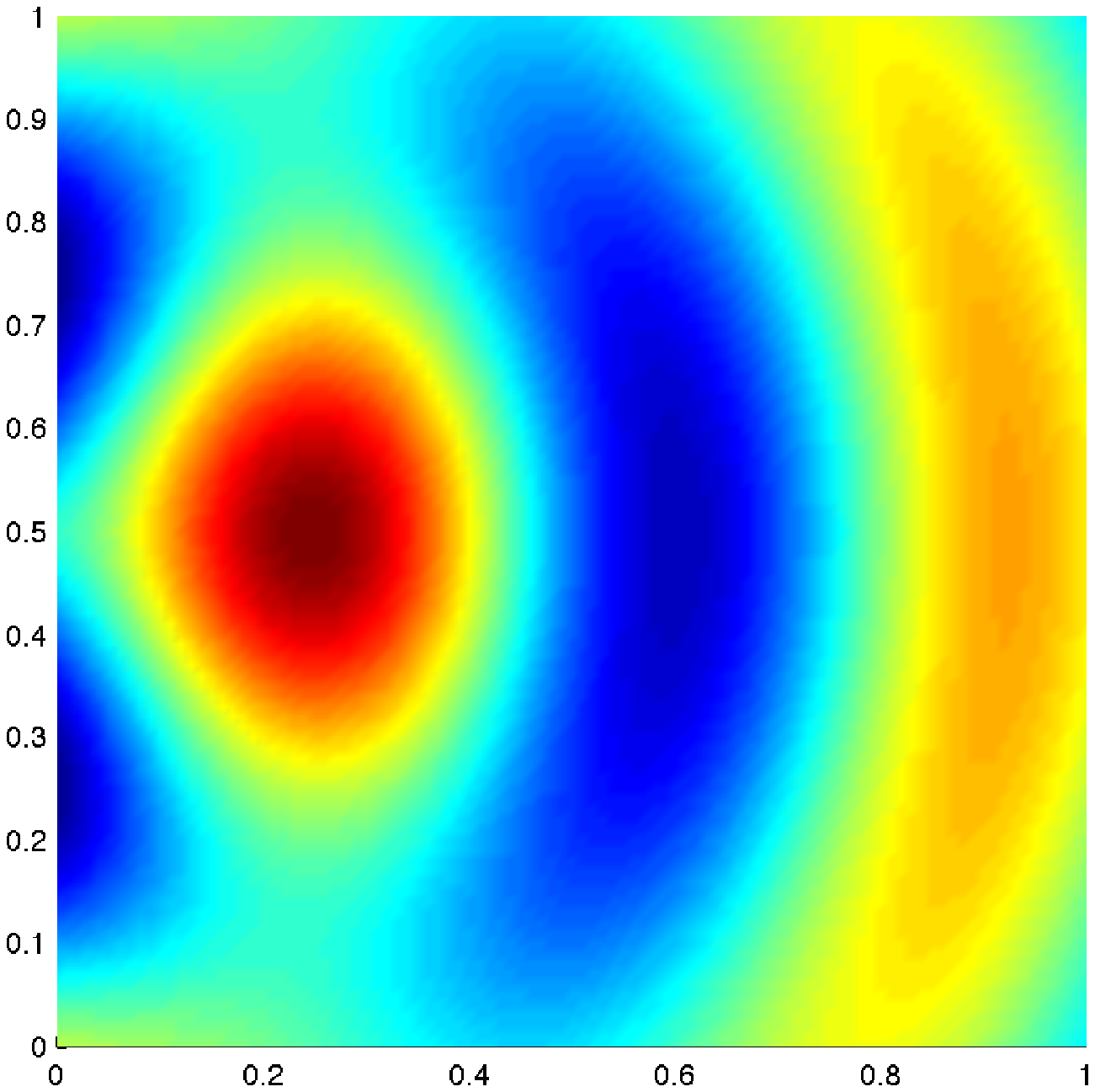}
  \end{minipage}
  \begin{minipage}[c]{0.33\textwidth}
\qquad\qquad\includegraphics[width=0.90\textwidth]{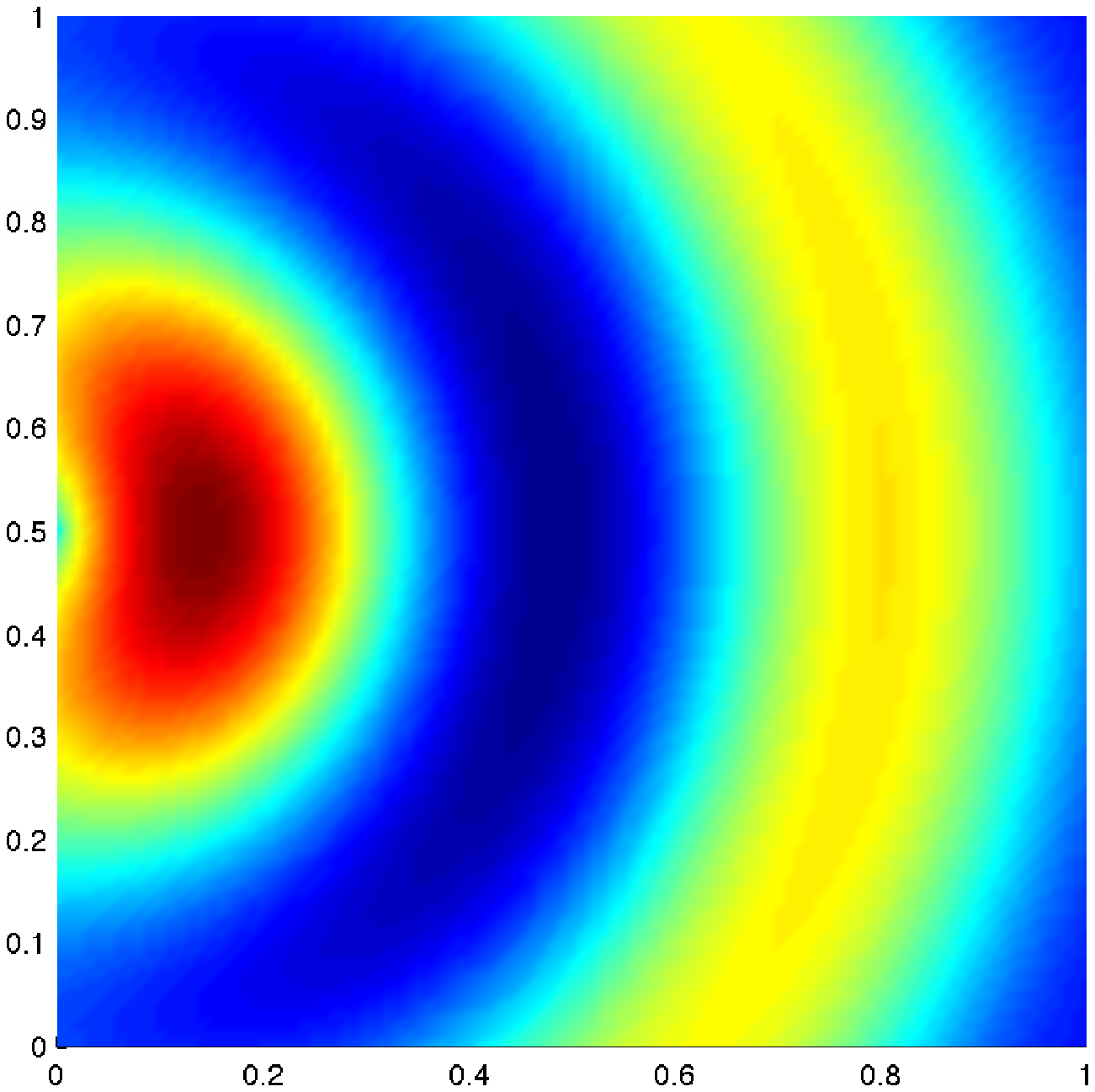}
  \end{minipage}
\caption{Real part of the function
  $u(\bx-\bx_0)=J_\xi(kr)\cos(\xi\vartheta)$, $\bx_0=(0,0.5)$ in the
  domain $\Omega=(0,1)^2$, for $k=10$ and $\xi=1$ (left), $\xi=3/2$
  (center), and $\xi=2/3$ (right).}
  \label{fig:sing}
\end{figure}

We report in Figure~\ref{fig:sing_errors} the relative errors in the
$L^2$-norm for the three cases, obtained on the structured triangular
mesh with 32 elements, for different values of $p$.
While for the smooth solution ($\xi=1$) we have exponential
convergence, before instability takes place, as in the previous
experiments, in the other two cases
the convergence is algebraic, even if the rates are 
unclear, the results for $\xi=3/2$ being better than those for $\xi=2/3$.
Similar results were obtained in \cite{PVersion} for the PWDG method.
We also report the results obtained with the PUM, for the sake of
comparison (this is the reason why we have used triangular meshes).


\begin{figure}[!htbp]
  \centering
\hspace{-1truecm}
  \begin{minipage}[c]{0.50\textwidth}
\psfrag{plogp}[c]{\scriptsize $p/\log(p)$}
\psfrag{error}[c]{\scriptsize $\log(L^2\text{-error})$}
    \qquad\qquad\includegraphics[width=0.80\textwidth]{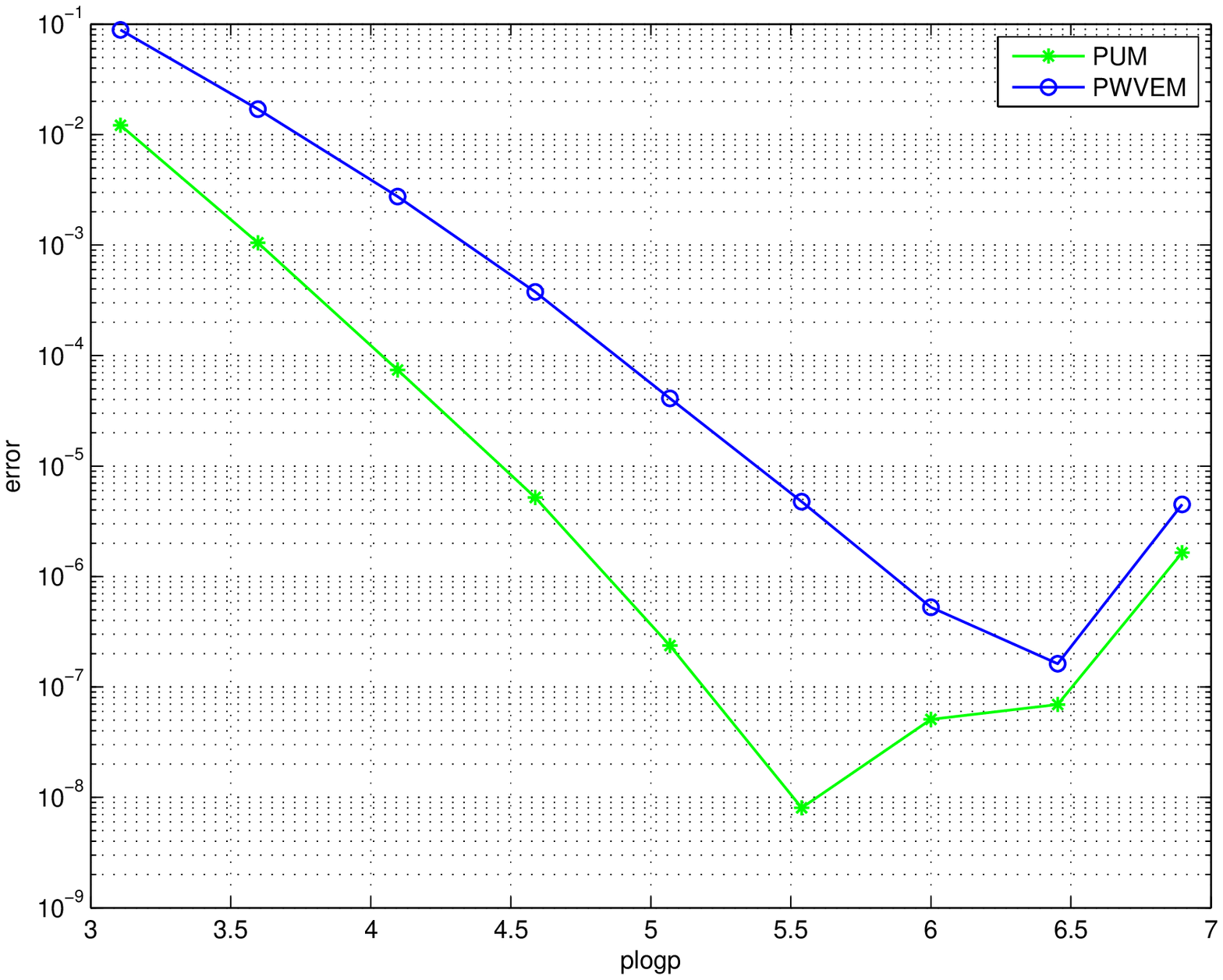}
  \end{minipage}%
  \begin{minipage}[c]{0.50\textwidth}
\psfrag{plogp}[c]{\scriptsize $p/\log(p)$}
\psfrag{error}[c]{\scriptsize $\log(L^2\text{-error})$}
\qquad\qquad\includegraphics[width=0.80\textwidth]{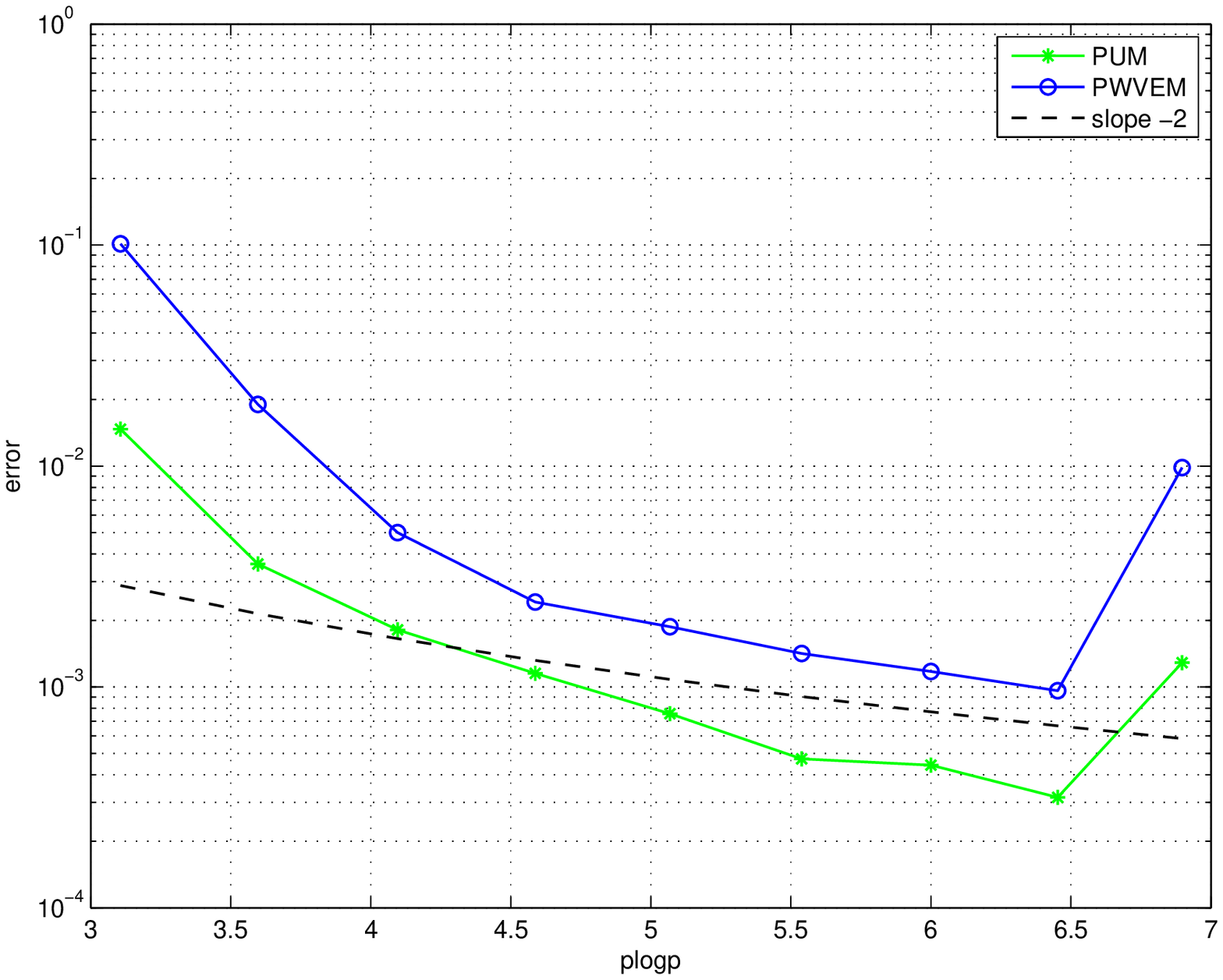}
  \end{minipage}\\
  \begin{minipage}[c]{0.50\textwidth}
\psfrag{plogp}[c]{\scriptsize $p/\log(p)$}
\psfrag{error}[c]{\scriptsize $\log(L^2\text{-error})$}
\qquad\qquad\includegraphics[width=0.80\textwidth]{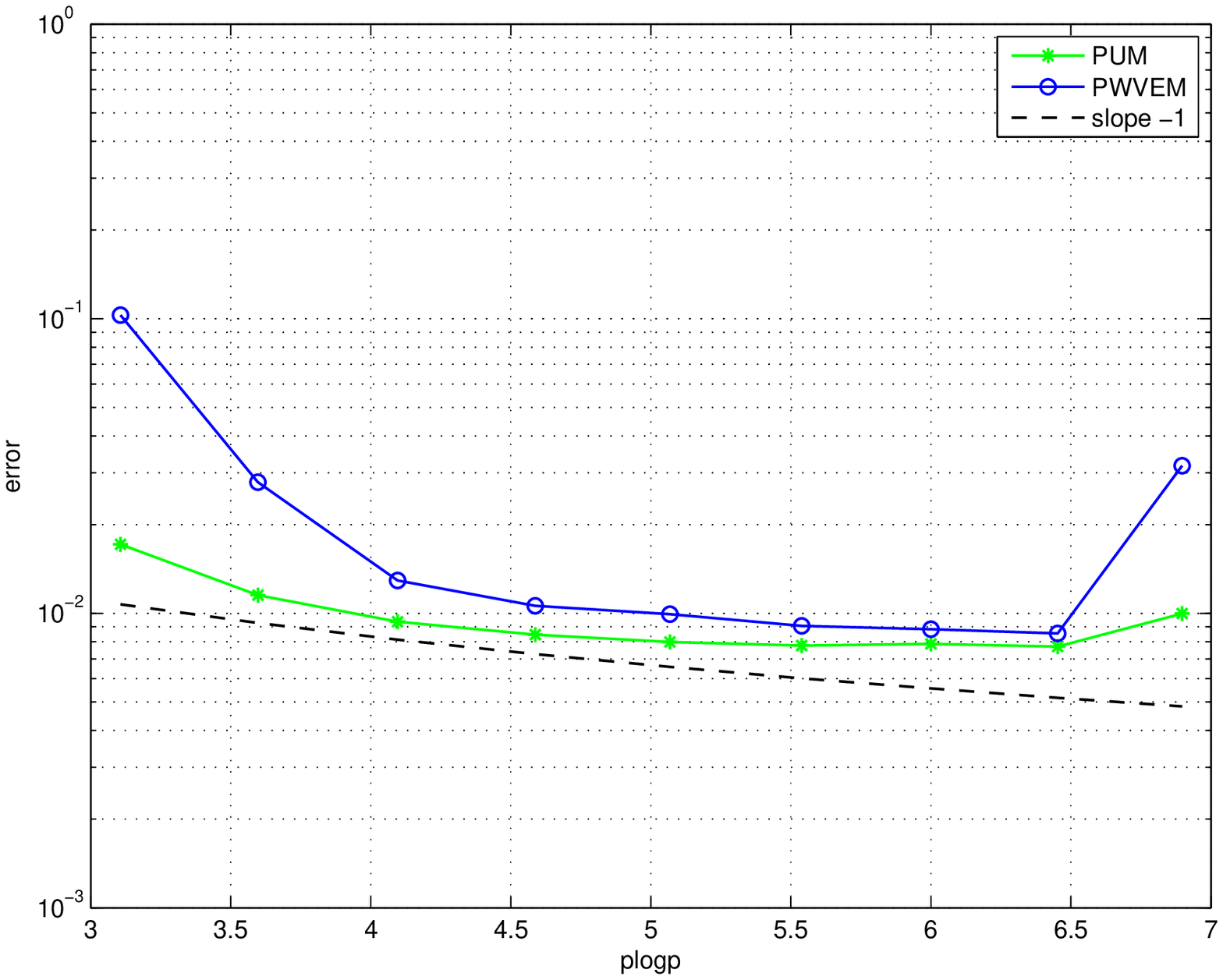}
  \end{minipage}
\caption{Relative error in the $L^2$-norm of the PW-VEM and PUM on the
  structured triangular mesh with 32 elements, for the model problem
  with exact solution~\eqref{eq:singsol} for $k=10$ and $\xi=1$ (top,
left), $\xi=3/2$ (top, right), and $\xi=2/3$ (bottom), for different
values of $p$.}
  \label{fig:sing_errors}
\end{figure}

\section{Conclusions}\label{sec:concl}
We have presented a first PW-VEM scheme for the discretization
of the Helmholtz equation. $H^1$-conformity is guaranteed by the VEM
framework, while high order convergence for the homogeneous problem, for smooth analytical
solutions, is achieved through a plane wave enrichment of the
approximating spaces. 
An $h$-version error analysis of the PW-VEM is derived, while
  numerical results also show exponential convergence of the
  $p$-version for smooth analytical solutions.
These preliminary results highlight that
a suitable interplay of $h$, $p$ and $k$ is important in order to
obtain quasi-optimality, and suggest that similar results as those of
the complete $hp$-analysis of~\cite{MS2011} could hold.

Several restrictions on the model problem we
have considered could be easily removed, and the method could be
extended from 2D to 3D, and to acoustic scattering problems. 
The extension of the method and its analysis to problems with non constant
coefficients or non zero source terms is also an interesting issue. 
Exploring alternative choices for the stabilization term, the
projection operator $\Pi$ and/or of the space onto which to project,
as well as extensions to non uniform $p$ and higher order VEM will be
subject of future research.

\section*{Acknowledgment}
The authors are grateful to L. Beir$\tilde {\rm a}$o da Veiga, F. Brezzi and L.D. Marini for stimulating and fruitful discussions.

Ilaria Perugia and Paola Pietra acknowledge support of the Italian Ministry of Education, University and Research (MIUR) through the project PRIN-2012HBLYE4.



\end{document}